\documentclass[12pt,reqno]{amsart}
\usepackage{amsmath,amssymb}
\usepackage{cite}

 \makeatletter
\evensidemargin \oddsidemargin
\makeatother

 \newtheorem{theorem}{Theorem}[section]
\newtheorem{lemma}[theorem]{Lemma}
\newtheorem{proposition}[theorem]{Proposition}
\newtheorem{corollary}[theorem]{Corollary}

\theoremstyle{definition}
\newtheorem{definition}[theorem]{Definition}

\theoremstyle{remark}
\newtheorem{remark}[theorem]{Remark}
\numberwithin{equation}{section}

 \newtheorem*{theorem*}{Theorem}

\numberwithin{equation}{section}
\numberwithin{equation}{section}

\begin{document}
\title[ Weighted numerical radii  of accretive matrices]{ Weighted numerical  radii  of accretive matrices }

 \author[A. Sheikhhosseini, M. Khosravi]{ Alemeh
Sheikhhosseini* and Maryam Khosravi}
\address[A. Sheikhhosseini]{Department of
Pure Mathematics, Faculty of Mathematics and Computer,
Shahid Bahonar University of Kerman,
Kerman, Iran}
\email{sheikhhosseini@uk.ac.ir;
hosseini8560@gmail.com}
\address[M. Khosravi]{Department of
Pure Mathematics, Faculty of Mathematics and Computer,
Shahid Bahonar University of Kerman,
Kerman, Iran}
\email{khosravi$_-$m@uk.ac.ir}



\subjclass[2010]{47A63, 47A64, 15B48}

 \keywords{ operator mean, sector matrix, weighted numerical radius}

 \begin{abstract}\noindent
In this paper, we consider the notion of the weighted numerical radius, denoted by $ \omega_{\nu}(A)$, for the class of accretive matrices, where \\ $ \nu  \in [0, 1].$ This notion generalizes both the classical numerical radius and the operator norm. All of the obtained inequalities reduce to the corresponding classical inequalities when $ \nu=1/2 $ or  when the matrix is positive definite.
\end{abstract}

 \maketitle

 \section{introduction and preliminary}\vspace{.2cm} \noindent

Denote by $\mathbb{M}_n$ the space of all complex $n \times n$ matrices. For a self-adjoint matrix $A \in \mathbb{M}_n$, the notation $A \geq 0$ indicates that $A$ is positive semidefinite, i.e., $\langle Ax, x \rangle \geq 0$ holds for every $x \in \mathbb{C}^n$. When $A$ is positive semidefinite and invertible, we call it positive definite and write $A > 0$.

 A matrix $A\in \mathbb M_n$ is called accretive, if $\Re z$ is positive in its Cartesian (or Toeplitz) decomposition,
$A=\Re z+i\Im z$,
where $\Re z=\frac{A+A^*}{2}$ and
$\Im z=\frac{A-A^*}{2}$. The numerical range of $A\in \mathbb M_n$ is defined by
\begin{equation*}
W(A)=\{x^*Ax:x\in\mathbb C^n,\ \|x\|=1\}.
\end{equation*}
Let $W(A)\subset S_{\alpha}$ for some $0 \leq \alpha <\frac{\pi}{2}$, where
$S_{\alpha}$ denote the sector region in the complex plane as follows:
\begin{equation*}
S_{\alpha}=\{z\in \mathbb C:\Re z>0, \vert\Im z\vert\leq(\Re z)\tan\alpha\}.
\end{equation*}
In this case, we will write $A\in {\mathcal S}_{\alpha}$. Since $0\notin S_{\alpha}$, then each member of $ {\mathcal S}_{\alpha}$ is invertible.

An operator mean $\sigma$ in the sense of Kubo-Ando \cite{a} is defined by an
operator monotone function $f:(0,\infty)\to(0,\infty)$ with $f(1) = 1$ as
$$A\sigma B = A^{1/2}f(A^{-1/2}BA^{-1/2})A^{1/2},$$
for positive invertible operators $A$ and $B$. Here the function $f$ is called the representing function of $\sigma$. Recently, Bedrani et al. proved that this definition can also be used for accretive operators.

Among the most significant operator means are the following:
\begin{itemize}
\item The arithmetic mean: $A \nabla B = \dfrac{A+B}{2}$, and its weighted version:\\  
$A \nabla_t B = tA + (1-t)B$, for $0 < t < 1$.

\item The geometric mean: $A^{1/2} (A^{-1/2} B A^{-1/2})^{1/2} A^{1/2}$, and the weighted\\ geometric mean:  
$A \sharp_{t} B = A^{1/2} (A^{-1/2} B A^{-1/2})^t A^{1/2}$, for $0 < t < 1$.

\item The Heinz mean: $\mathcal{H}_t(A,B) = \dfrac{A \sharp_{t} B + A \sharp_{1-t} B}{2}$, for $0 < t < 1$.

\item The logarithmic mean: $\mathcal{L}(A,B) = \displaystyle\int_0^1 A \sharp_{t} B \, dt$.
\end{itemize}

The function $f$ is said to be matrix monotone if $A \geq B$ with spectra are contained in $J,$ implies $f(A) \geq f(B).$ Also, $f$ is called matrix concave if
$$f(\lambda A+(1-\lambda) B) \geq \lambda f(A)+(1-\lambda) f(B)$$
for all $\lambda \in [0,1]$ and for every Hermitian matrices $A, B \in \mathbb{M}_{n} $ whose spectra are in the interval $J.$\\
Recall that if $f$ is a nonnegative continuous function on $[0, \infty),$ then $f$ is matrix monotone if and only if
$f$ is matrix concave, see \cite[Corollary 1.12]{pec}.\\
We also use the notation
\begin{equation*}
{\textbf{m}}=\lbrace f| f:(0,\infty)\rightarrow(0, \infty)~\text{is a  matrix monotone function with}f(1)=1\rbrace.
\end{equation*}
It is well known that the equation
\begin{equation}\label{m.f}
f(A)\sigma f(B)\leq f(A\sigma B),
\end{equation}
for each $A,B\geq0$, $f\in\textbf{m}$ and operator mean $\sigma$
 
The numerical radius $\omega(A)$ of $A \in \mathbb{M}_n$ is defined by
$$
\omega(A) = \sup \{ |\langle Ax, x \rangle| : x \in \mathbb{C}^n, \, \|x\| = 1 \}.
$$

It is well established that $\omega(\cdot)$ is a norm on $\mathbb{M}_n$, equivalent to the usual operator norm $\|\cdot\|$. Indeed, for every $A \in \mathbb{M}_n$, the following fundamental inequalities hold:

\begin{equation}\label{eq1}
\dfrac{1}{2} \|A\| \leq \omega(A) \leq \|A\|.
\end{equation}
The first inequality in \eqref{eq1} becomes an equality if $A^2=0,$ while the second inequality becomes an equality if $T$ is normal in the sense that $AA^*=A^*A;$ see \cite{gust}.\\
 When $A\in\mathbb{M}_{n}$, the real and imaginary parts of $A$ are defined, respectively, by
$$\Re (A)=A\nabla A^*\;{\text{and}}\;\Im (A)=(-iA)\nabla (-iA)^*,$$ where for two operators $A,B\in\mathbb{M}_{n}$, the arithmetic mean $A\nabla B$ is defined by $A\nabla B=\frac{A+B}{2}.$ Then the Cartesian decomposition of $A$ is $A=\Re (A)+i\Im (A)$.
An important and useful identity for the numerical radius which has been known for researchers is as follows, see \cite{Z-M} for example.
\begin{proposition}\label{prop_yamazaki}
Let $A\in\mathbb{M}_{n}$. Then $$\omega(A)=\sup_{\theta}\|\Re(e^{i\theta}A)\|.$$
\end{proposition}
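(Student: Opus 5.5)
We prove the two inequalities $\sup_{\theta}\|\Re(e^{i\theta}A)\|\le \omega(A)$ and $\omega(A)\le \sup_{\theta}\|\Re(e^{i\theta}A)\|$ separately. Both rest on two facts. First, for every unit vector $x$,
$$\langle \Re(e^{i\theta}A)x,x\rangle=\Re\bigl(e^{i\theta}\langle Ax,x\rangle\bigr).$$
This follows from $\langle A^*x,x\rangle=\overline{\langle Ax,x\rangle}$ and the definition $\Re(T)=T\nabla T^*$. Second, $\Re(e^{i\theta}A)$ is self-adjoint, so its operator norm equals its numerical radius:
$$\|\Re(e^{i\theta}A)\|=\sup_{\|x\|=1}|\langle \Re(e^{i\theta}A)x,x\rangle|.$$
This is the equality case of the second inequality in \eqref{eq1}, which holds for normal, in particular Hermitian, matrices.

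For the inequality ``$\ge$'', fix $\theta$ and a unit vector $x$. By the first fact,
$$|\langle \Re(e^{i\theta}A)x,x\rangle|=|\Re(e^{i\theta}\langle Ax,x\rangle)|\le |\langle Ax,x\rangle|\le \omega(A).$$
Taking the supremum over $x$ and using the second fact gives $\|\Re(e^{i\theta}A)\|\le\omega(A)$. Since $\theta$ was arbitrary, $\sup_\theta\|\Re(e^{i\theta}A)\|\le\omega(A)$.

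For the inequality ``$\le$'', pick a unit vector $x_0$ with $|\langle Ax_0,x_0\rangle|=\omega(A)$. Such a vector exists because the unit sphere in $\mathbb C^n$ is compact. Write $\langle Ax_0,x_0\rangle=\omega(A)e^{-i\theta_0}$ and take $\theta=\theta_0$. Then $e^{i\theta_0}\langle Ax_0,x_0\rangle=\omega(A)$ is real and nonnegative, so by the first fact
$$\omega(A)=\langle \Re(e^{i\theta_0}A)x_0,x_0\rangle\le\|\Re(e^{i\theta_0}A)\|\le\sup_\theta\|\Re(e^{i\theta}A)\|.$$
In infinite dimensions one would instead use an $\varepsilon$-approximate maximizer, with the same argument.

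There is no real obstacle. The one point needing care is the identity $\|H\|=\sup_{\|x\|=1}|\langle Hx,x\rangle|$ for Hermitian $H$, which we take from the normal case of \eqref{eq1}.
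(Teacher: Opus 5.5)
Your proof is correct and complete. The paper gives no proof of this proposition; it quotes it as a well-known identity from the literature. Your argument is the standard one: the pointwise identity $\langle \Re(e^{i\theta}A)x,x\rangle=\Re\bigl(e^{i\theta}\langle Ax,x\rangle\bigr)$, combined with $\|H\|=\omega(H)$ for Hermitian $H$.

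The usual presentation differs only cosmetically. It avoids choosing a maximizing vector by using $|z|=\sup_{\theta}\Re(e^{i\theta}z)$ for $z\in\mathbb{C}$ and interchanging suprema:
\[
\omega(A)=\sup_{\|x\|=1}\sup_{\theta}\Re\bigl(e^{i\theta}\langle Ax,x\rangle\bigr)=\sup_{\theta}\sup_{\|x\|=1}\langle \Re(e^{i\theta}A)x,x\rangle\le\sup_{\theta}\|\Re(e^{i\theta}A)\|.
\]
This gives your ``$\le$'' direction in one line. It needs no compactness or $\varepsilon$-approximation, so the identity holds verbatim for bounded operators on any Hilbert space.
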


This identity is a well-known characterization of the numerical radius. and has found various applications. A similar formulation for Hilbert space operators was previously defined by the authors in \cite{sheikh}.

\begin{definition}\cite{sheikh}\label{def_weighted_oper}
Let $A\in\mathbb{M}_{n}$ be any operator and let $0\leq \nu\leq 1.$ We define the weighted real and imaginary parts of $T$ by
$$\Re_{\nu}( A)=A\nabla_{\nu} A^*\;{\text{and}}\;\Im_{\nu} (A)=(-iA)\nabla_{\nu}(-iA)^*,$$ respectively.
\end{definition}

Immediate properties of $\Re_{\nu}$ and $\Im_{\nu}$ are as follows.
\begin{proposition}\cite{sheikh}
Let $A\in\mathbb{M}_{n}$ and let $0\leq \nu\leq 1.$ Then
$$\Im_{\nu}(iA)= \Re_{\nu} (A), ~ \Re_{\nu} (iA) = -\Im_{\nu}(A),$$
\begin{equation}\label{e00}
\Re_{\nu} (A) =\Re (A) +(2\nu-1)i \Im(A),
\end{equation}
\begin{equation}\label{e0}
\Im_{\nu}(A) =\Im(A)- (2\nu-1)i \Re (A)
\end{equation}
and
\begin{equation}\label{eq000}
\Re_{\nu}(A)+i\Im_{\nu}(A)=2\nu A.
\end{equation}
\end{proposition}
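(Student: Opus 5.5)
The proof is a direct computation from Definition \ref{def_weighted_oper}. Throughout we use $A\nabla_\nu B=\nu A+(1-\nu)B$, so that
$$\Re_\nu(A)=\nu A+(1-\nu)A^*,\qquad \Im_\nu(A)=-i\nu A+i(1-\nu)A^*,$$
the second because $(-iA)^*=iA^*$.

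For the first pair of identities, substitute $iA$ for $A$ and use $(iA)^*=-iA^*$.
\begin{itemize}
\item $\Im_\nu(iA)=(-i\cdot iA)\nabla_\nu(-i\cdot iA)^*=A\nabla_\nu A^*=\Re_\nu(A)$.
\item $\Re_\nu(iA)=\nu iA-(1-\nu)iA^*=-\bigl(-i\nu A+i(1-\nu)A^*\bigr)=-\Im_\nu(A)$.
\end{itemize}

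For \eqref{e00}, substitute the Cartesian decomposition $A=\Re(A)+i\Im(A)$ and $A^*=\Re(A)-i\Im(A)$. Here $\Re(A)=\frac{A+A^*}{2}$ and $\Im(A)=\frac{A-A^*}{2i}$, consistent with $\Im(A)=(-iA)\nabla(-iA)^*$. Then
$$\Re_\nu(A)=\nu\bigl(\Re A+i\Im A\bigr)+(1-\nu)\bigl(\Re A-i\Im A\bigr)=\Re(A)+(2\nu-1)i\Im(A).$$

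For \eqref{e0}, the cleanest route is to apply \eqref{e00} to $-iA$, since $\Im_\nu(A)=\Re_\nu(-iA)$ by definition. We need $\Re(-iA)$ and $\Im(-iA)$:
\begin{itemize}
\item $\Re(-iA)=\frac{-iA+iA^*}{2}=\Im(A)$;
\item $\Im(-iA)=\frac{-iA-iA^*}{2i}=-\Re(A)$.
\end{itemize}
Hence
$$\Im_\nu(A)=\Im(A)+(2\nu-1)i\bigl(-\Re(A)\bigr)=\Im(A)-(2\nu-1)i\Re(A).$$

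For \eqref{eq000}, add the two explicit forms:
$$\Re_\nu(A)+i\Im_\nu(A)=\nu A+(1-\nu)A^*+\nu A-(1-\nu)A^*=2\nu A.$$

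The only step needing care is the sign and convention for $\Im$. The introduction writes $\Im z=\frac{A-A^*}{2}$, which omits the factor $i$; I use the operator convention $\Im(A)=(-iA)\nabla(-iA)^*=\frac{A-A^*}{2i}$. I would check this against $\nu=\tfrac12$, where \eqref{e0} must reduce to $\Im_{1/2}(A)=\Im(A)$, which it does.
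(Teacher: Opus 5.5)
Your proposal is correct: every identity follows from the direct substitutions you carry out. The paper gives no proof of its own here (it imports the proposition from \cite{sheikh}), and this routine computation from Definition \ref{def_weighted_oper} is exactly what the statement needs. You are also right to use $\Im(A)=(-iA)\nabla(-iA)^*=\frac{A-A^*}{2i}$ rather than the introduction's $\frac{A-A^*}{2}$, which is a slip. Under that slip, $A=\Re(A)+i\Im(A)$ fails, and with it \eqref{e00} and \eqref{e0}.
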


\begin{definition}\cite{sheikh}\label{d1}
Let $ 0 \leq \nu \leq 1 $ and $ A \in \mathbb{M}_{n}.$ The $ \nu- $weighted numerical radius of $A$ is denoted by $ \omega_{\nu} (A)$ and is defined by
$$\omega_{\nu}(A)=\sup_{\theta \in \mathbb{R} } \Vert \Re_{\nu} ( e^{i \theta}A) \Vert. $$
Obviously, $ \omega_{\frac{1}{2}}(A)=\omega(A) $ and $ \omega_{0}(A)=\omega_{1}(A) =\Vert A\Vert=\Vert A^*\Vert.$
\end{definition}

 $\omega_\nu(.)$ defines a norm on $\mathbb{M}_{n}$ which is equivalent to the  operator norm \cite{sheikh};
\begin{equation}\label{eq.equvalent}
\max\{\nu,1-\nu\}\|A\|\leq\omega_\nu(A)\leq\|A\|.
\end{equation}


In this paper, we present some inequalities for
$\omega_\nu(\cdot)$ on sectorial matrices, including sharp bounds for powers, 
connections with operator monotone functions, and estimates for operator means. 
These results extend several known inequalities for the numerical radius to the weighted numerical radius of sectorial matrices.


\section{Some Preliminaries}\vspace{.2cm} \noindent
In this section, we recall some auxiliary lemmas that will be used frequently throughout the paper.

\begin{lemma}\label{l1}\cite{sheikh}
Let $ A \in \mathbb{M}_{n}$ and $ 0 \leq \nu \leq 1. $ Then
\begin{equation}\label{e1}
\omega_{\nu}( A)= \omega_{\nu}( A^{*})=\omega_{1- \nu}( A).
\end{equation}
\end{lemma}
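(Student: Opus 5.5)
The two identities in \eqref{e1} both come from pointwise relations between weighted real parts, followed by taking the supremum over $\theta$ in Definition \ref{d1}. In each case we exhibit, for every $\theta$, a matrix whose norm equals $\|\Re_\nu(e^{i\theta}A)\|$ and which is itself of the form $\Re_{\nu'}(e^{i\theta'}B)$ for the relevant $\nu'$ and $B$, as $\theta'$ runs over all of $\mathbb{R}$. The only facts used are that $\|X^*\|=\|X\|$ and that the substitution $\theta\mapsto-\theta$ is a bijection of $\mathbb{R}$.

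\emph{Step 1: $\omega_\nu(A)=\omega_{1-\nu}(A)$.} By Definition \ref{def_weighted_oper},
\[
\Re_\nu(e^{i\theta}A)=\nu e^{i\theta}A+(1-\nu)e^{-i\theta}A^*.
\]
Taking adjoints gives
\[
\big(\Re_\nu(e^{i\theta}A)\big)^*=\nu e^{-i\theta}A^*+(1-\nu)e^{i\theta}A=\Re_{1-\nu}(e^{i\theta}A).
\]
Hence $\|\Re_\nu(e^{i\theta}A)\|=\|\Re_{1-\nu}(e^{i\theta}A)\|$ for every $\theta$. Taking the supremum over $\theta$ yields $\omega_\nu(A)=\omega_{1-\nu}(A)$.

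\emph{Step 2: $\omega_\nu(A^*)=\omega_\nu(A)$.} Here
\[
\Re_\nu(e^{i\theta}A^*)=\nu e^{i\theta}A^*+(1-\nu)e^{-i\theta}A.
\]
Set $\phi=-\theta$. Then this equals $\nu e^{-i\phi}A^*+(1-\nu)e^{i\phi}A$, which is exactly $\Re_{1-\nu}(e^{i\phi}A)$. As $\theta$ ranges over $\mathbb{R}$, so does $\phi$, so $\omega_\nu(A^*)=\omega_{1-\nu}(A)$. Combining this with Step 1 gives $\omega_\nu(A^*)=\omega_\nu(A)$.

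There is no real obstacle. The only point needing care is the bookkeeping of which coefficient, $\nu$ or $1-\nu$, multiplies which term after conjugation and after the reparametrization $\theta\mapsto-\theta$.
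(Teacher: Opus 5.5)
Your proof is correct. The identity $\bigl(\Re_\nu(e^{i\theta}A)\bigr)^*=\Re_{1-\nu}(e^{i\theta}A)$ together with $\|X^*\|=\|X\|$ gives $\omega_\nu(A)=\omega_{1-\nu}(A)$, and the reparametrization $\theta\mapsto-\theta$ gives $\omega_\nu(A^*)=\omega_{1-\nu}(A)$. The paper gives no proof of this lemma (it quotes it from the authors' earlier work), and your direct verification from Definition \ref{d1} is the standard argument.
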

\begin{lemma}\cite{bed1}\label{l.sig}
Let $ A, B \in {\mathcal S}_{\alpha}$. Then $A\sigma B \in {\mathcal S}_{\alpha}$ and
\begin{equation*}\label{1.1}
\Re A\sigma \Re B\leq \Re (A\sigma B)\leq\sec^2\alpha(\Re A\sigma \Re B).
\end{equation*}
\end{lemma}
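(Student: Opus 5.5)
The plan is to reduce everything to the \emph{parallel sum} $A:B=(A^{-1}+B^{-1})^{-1}$ through the Kubo--Ando integral representation. For a representing function $f\in\textbf{m}$ one has
$$A\sigma B=aA+bB+\int_{(0,\infty)}\frac{1+\lambda}{\lambda}\,\bigl((\lambda A):B\bigr)\,d\mu(\lambda),$$
with $a,b\ge 0$ and $\mu$ a positive measure. For accretive $A,B$ this formula is exactly the extension of $\sigma$ used by Bedrani et al. Taking real parts is linear, and the set $\mathcal S_\alpha$ together with the zero matrix is a convex cone closed under positive scalings. Also $\lambda A\in\mathcal S_\alpha$ whenever $A\in\mathcal S_\alpha$ and $\lambda>0$. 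So it will suffice to prove three things for $A,B\in\mathcal S_\alpha$:
\begin{equation*}
\text{(i) } A:B\in\mathcal S_\alpha,\qquad \text{(ii) } \Re A:\Re B\le \Re(A:B),\qquad \text{(iii) } \Re(A:B)\le \sec^2\alpha\,(\Re A:\Re B).
\end{equation*}
Integrating against $\mu$ and adding $aA+bB$ then yields the lemma, because the positive case of the same representation gives $\Re A\,\sigma\,\Re B$.

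For (i) and (ii), I would write the parallel sum as a Schur complement: $A:B=M/M_{11}$, where
$$M=\begin{pmatrix}A & A\\ A & A+B\end{pmatrix}.$$
First one checks that $M$ is sectorial with $W(M)\subset S_\alpha\cup\{0\}$. Indeed, $\langle M(u,v),(u,v)\rangle=\langle A(u+v),u+v\rangle+\langle Bv,v\rangle$. Next, fix $x$ and put $y_0=-M_{11}^{-1}M_{12}x$. The first block row of $M(y_0,x)$ then vanishes, so
$$\langle M(y_0,x),(y_0,x)\rangle=\langle (M/M_{11})x,x\rangle .$$
This places $W(A:B)$ inside the sector, which gives (i). For (ii), recall that for the positive block matrix $\Re M$ the Schur complement satisfies
$$\langle (\Re M/\Re M_{11})x,x\rangle=\min_y\langle \Re M(y,x),(y,x)\rangle .$$
Evaluating at $y=y_0$ gives
$$\langle(\Re M/\Re M_{11})x,x\rangle\le \Re\langle (M/M_{11})x,x\rangle .$$
Since $\Re M/\Re M_{11}=\Re A:\Re B$, this is (ii).

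For (iii), I would use the known two-sided estimate for the inverse of a sectorial matrix $T\in\mathcal S_\alpha$:
$$\cos^2\alpha\,(\Re T)^{-1}\le \Re(T^{-1})\le (\Re T)^{-1}.$$
It follows by writing $T=(\Re T)^{1/2}(I+iK)(\Re T)^{1/2}$ with $K$ Hermitian and $\|K\|\le\tan\alpha$. Put $C=A^{-1}+B^{-1}$; it lies in $\mathcal S_\alpha$ because inverses preserve the sector. The left inequality applied to $A$ and $B$ gives $\Re C\ge \cos^2\alpha\,\bigl((\Re A)^{-1}+(\Re B)^{-1}\bigr)$. Hence $(\Re C)^{-1}\le \sec^2\alpha\,(\Re A:\Re B)$. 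The right inequality applied to $C$ then gives $\Re(A:B)=\Re(C^{-1})\le(\Re C)^{-1}$, and (iii) follows.

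I expect the main obstacle to be the sharp lower bound (ii). The naive route through the inverse inequalities loses a factor $\cos^2\alpha$ there, which is why I switch to the Schur-complement variational argument. A secondary point needs care: the integral formula must actually be the definition of $\sigma$ for accretive matrices, or be shown to agree with it. The integral must also converge, which follows from the bound $\|(\lambda A):B\|\le C\min(\lambda,1)$.
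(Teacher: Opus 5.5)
The paper does not prove this lemma; it only cites \cite{bed1}, so there is no in-paper argument to compare with. Your strategy is sound. You reduce $\sigma$ through the Kubo--Ando representation to parallel sums, which is the same as reducing to weighted harmonic means $((1-t)A^{-1}+tB^{-1})^{-1}$ after the substitution $t=\lambda/(1+\lambda)$. You then prove the two-sided bound for $A:B$ and integrate. Your step (iii), the convergence estimate and the final assembly are correct as written.

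One step is wrong as stated. For $M=\begin{pmatrix}A&A\\A&A+B\end{pmatrix}$, the Schur complement of the $(1,1)$ block is
\[
M_{22}-M_{21}M_{11}^{-1}M_{12}=(A+B)-A=B,
\]
not $A:B$. Indeed, your vector $y_0=-M_{11}^{-1}M_{12}x=-x$ gives $\langle M(-x,x),(-x,x)\rangle=\langle Bx,x\rangle$. For this $M$, the parallel sum is the complement of the $(2,2)$ block:
\[
A-A(A+B)^{-1}A=A(A+B)^{-1}B=(A^{-1}+B^{-1})^{-1}.
\]
Since $A,B$ are not Hermitian, you should state that this identity holds for any invertible $A,B$ with $A+B$ invertible, which is the case here because $A+B\in\mathcal S_\alpha$. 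So use the test vector $(x,y_0)$ with $y_0=-(A+B)^{-1}Ax$, or equivalently reorder $M$ as $\begin{pmatrix}A+B&A\\A&A\end{pmatrix}$.

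After this relabeling, (i) and (ii) go through verbatim:
\begin{itemize}
\item $\langle M(x,y_0),(x,y_0)\rangle=\langle (A:B)x,x\rangle$.
\item The complement of the $(2,2)$ block of $\Re M$ is $\Re A:\Re B$.
\item The variational characterization, evaluated at $y_0$, gives $\Re A:\Re B\le \Re(A:B)$.
\end{itemize}

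A minor point on membership $A\sigma B\in\mathcal S_\alpha$: integrating only places $W(A\sigma B)$ in the closed cone $S_\alpha\cup\{0\}$. Strict membership then follows from your lower bound, since $\Re A\,\sigma\,\Re B>0$.

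What your Schur-complement step buys is the sharp constant $1$ on the left. The inverse estimates $\cos^2\alpha\,(\Re T)^{-1}\le\Re(T^{-1})\le(\Re T)^{-1}$ alone would give only $\cos^2\alpha\,(\Re A\,\sigma\,\Re B)\le\Re(A\sigma B)$, as you correctly anticipated.
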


\begin{lemma}\label{l2}\cite{cho}
Let $A,B\in \mathcal{S} _{\alpha}.$ If $t\in[0,1]$, then
\begin{equation}\label{cho-1}
\cos^{2t}(\theta) \Re A^t\leq \Re^t A\leq \Re A^t
\end{equation}
and if $t\in[-1,0],$ then
\begin{equation}\label{cho-2}
\Re A^t \leq \Re^tA\leq\cos^{2t}(\theta) \Re A^t.
\end{equation}
\end{lemma}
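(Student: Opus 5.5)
The plan is to handle both displays in Lemma~\ref{l2} (where the angle $\theta$ should be read as the sector angle $\alpha$) using the integral representations of the power functions. For $0<t<1$ and $x>0$,
$$x^{t}=\frac{\sin t\pi}{\pi}\int_0^\infty \lambda^{t-1}\,\frac{x}{\lambda+x}\,d\lambda,\qquad x^{-t}=\frac{\sin t\pi}{\pi}\int_0^\infty \lambda^{-t}\,\frac{1}{\lambda+x}\,d\lambda .$$
Both remain valid for $A\in\mathcal S_\alpha$ with the principal branch, since $\lambda+A\in\mathcal S_\alpha$ is invertible for every $\lambda\ge 0$. Since $\Re$ is real-linear, $\Re A^{\pm t}$ is then the same integral of $\Re(\lambda+A)^{-1}$.

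The basic ingredient is the two-sided resolvent estimate: for every $B\in\mathcal S_\alpha$,
$$\cos^2\alpha\,(\Re B)^{-1}\le \Re B^{-1}\le (\Re B)^{-1}.$$
To prove it, I write $B=(\Re B)^{1/2}(I+iK)(\Re B)^{1/2}$ with $K$ Hermitian and $\|K\|\le\tan\alpha$; the latter bound is exactly the condition $W(B)\subset S_\alpha$. Then $\Re B^{-1}=(\Re B)^{-1/2}(I+K^2)^{-1}(\Re B)^{-1/2}$. Since $\cos^2\alpha\le (I+K^2)^{-1}\le I$, the estimate follows.

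I apply this with $B=\lambda+A$, noting $\Re(\lambda+A)=\lambda+\Re A$. For the upper bound in \eqref{cho-1}, write $A(\lambda+A)^{-1}=I-\lambda(\lambda+A)^{-1}$. Then
$$\Re\big(A(\lambda+A)^{-1}\big)\ge I-\lambda(\lambda+\Re A)^{-1}=\Re A(\lambda+\Re A)^{-1},$$
and integrating gives $\Re A^t\ge \Re^t A$. For the lower bound in \eqref{cho-2} (with $t\in[-1,0]$), the second integral together with $\Re(\lambda+A)^{-1}\le(\lambda+\Re A)^{-1}$ gives $\Re A^{t}\le \Re^{t}A$ directly. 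The endpoint $t=-1$ is the estimate itself, and $t=0$ is trivial.

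For the upper bound in \eqref{cho-2}, the integral route with $\Re(\lambda+A)^{-1}\ge\cos^2\alpha(\lambda+\Re A)^{-1}$ yields only the constant $\cos^{-2}\alpha$. I would therefore try interpolation instead. Put $s=-t\in[0,1]$ and $C=\Re A$. From the estimate, $\Re A^{-1}=C^{-1/2}(I+K^2)^{-1}C^{-1/2}$. The aim is to show $\Re A^{-s}\le C^{-1/2}\,\cdots\,C^{-1/2}$ with the middle factor dominated by $\cos^{-2s}\alpha\,C^{1-s}$-type terms, using operator concavity of $x\mapsto x^{s}$ (equivalent to monotonicity by \cite[Corollary 1.12]{pec}) and the fact that $A^{-s}\in\mathcal S_{s\alpha}$.

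An alternative I would try first is to write $A^{-s}=I\sharp_s A^{-1}$, which is legitimate by the Bedrani et al.\ extension of Kubo--Ando means to accretive matrices. Lemma~\ref{l.sig} then gives $\Re A^{-s}\le\sec^2\alpha\,(\Re A^{-1})^{s}$, and combining with $\Re A^{-1}\le C^{-1}$ and L\"owner--Heinz yields $\Re A^{-s}\le \sec^2\alpha\,\Re^{-s}A$. The same device with $A^t=I\sharp_tA$ gives the lower bound in \eqref{cho-1} with constant $\cos^2\alpha$ in place of $\cos^{2t}\alpha$.

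The main obstacle is sharpening these constants from $\cos^{\pm2}\alpha$ to $\cos^{\pm2t}\alpha$. For this I would try to reprove the $\sharp_s$-estimate of Lemma~\ref{l.sig} with the refined constant $\sec^{2s}\alpha$. The idea is to use the sector $\mathcal S_{s\alpha}$ containing $W(A^{\pm s})$, and to compare $\Re(I\sharp_sB)$ with $I\sharp_s\Re B$ through the representation $B=(\Re B)^{1/2}(I+iK)(\Re B)^{1/2}$, reducing the question to the scalar inequality $|1+ik|^{s}\le\sec^{s}\alpha$ on the spectrum of $K$. Finally, the remaining inequality in \eqref{cho-1} follows from the corresponding one in \eqref{cho-2} applied to $A^{-1}\in\mathcal S_\alpha$, together with the resolvent estimate and L\"owner--Heinz monotonicity of $x\mapsto x^t$.
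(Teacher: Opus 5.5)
\textbf{There is a genuine gap: the upper bound in \eqref{cho-2} is never proved, and your proposed routes for it point the wrong way.}

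Your integral-representation argument for the two ``easy'' halves is sound. The resolvent estimate $\cos^2\alpha\,(\Re B)^{-1}\le \Re B^{-1}\le(\Re B)^{-1}$ is proved correctly. Integrating $\Re(\lambda+A)^{-1}\le(\lambda+\Re A)^{-1}$ does give $\Re^tA\le \Re A^t$ for $t\in[0,1]$ and $\Re A^t\le\Re^tA$ for $t\in[-1,0]$. (The paper gives no proof of this lemma; it quotes it from \cite{cho}.)

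The gap concerns the upper bound in \eqref{cho-2}. With $s=-t\in[0,1]$ and $C=\Re A$, it says $\Re A^{-s}\ge \cos^{2s}\alpha\, C^{-s}$. This is a \emph{lower} bound for $\Re A^{-s}$. Every route you sketch produces an \emph{upper} bound for $\Re A^{-s}$:
\begin{itemize}
\item The interpolation plan aims at $\Re A^{-s}\le C^{-1/2}\cdots C^{-1/2}$.
\item The device $A^{-s}=I\sharp_sA^{-1}$ with the right-hand side of Lemma~\ref{l.sig} gives $\Re A^{-s}\le\sec^2\alpha\, C^{-s}$. This is weaker than the easy half you already have, and says nothing about the target.
\item For the same reason, sharpening the constant in the right-hand inequality of Lemma~\ref{l.sig} to $\sec^{2s}\alpha$ cannot close this gap.
\end{itemize}
The proposed reduction to the scalar inequality $|1+ik|^s\le\sec^s\alpha$ on the spectrum of $K$ is not valid in any case. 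When $C$ and $K$ do not commute, $B^s\ne C^{s/2}(I+iK)^sC^{s/2}$.

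\textbf{The fix.} No refined constant is needed; the missing idea is to combine the easy halves with inversion. Since $A^{-1}\in\mathcal S_\alpha$ and $(A^{-1})^s=A^{-s}$, your easy half of \eqref{cho-1} applied to $A^{-1}$ gives $(\Re A^{-1})^s\le \Re A^{-s}$. The lower resolvent bound $\Re A^{-1}\ge\cos^2\alpha\, C^{-1}$ together with L\"owner--Heinz gives $(\Re A^{-1})^s\ge\cos^{2s}\alpha\, C^{-s}$. Chaining these two is exactly $\Re^tA\le\cos^{2t}\alpha\,\Re A^t$.

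Symmetrically, the lower bound in \eqref{cho-1} follows from the \emph{easy} half of \eqref{cho-2} applied to $A^{-1}$: $\Re A^t\le(\Re A^{-1})^{-t}\le\cos^{-2t}\alpha\,(\Re A)^t$, using that $x\mapsto x^{-t}$ is operator decreasing. Your closing sentence is right only if ``the corresponding one'' means this easy half. Applying the upper bound of \eqref{cho-2} to $A^{-1}$ yields merely a lower bound on $\Re A^t$.

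In short, the factor $\cos^{\pm2t}\alpha$ comes from L\"owner--Heinz acting on the $\cos^2\alpha$ of the $t=-1$ resolvent estimate, not from a sharpened Lemma~\ref{l.sig}.
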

 

\begin{lemma}\label{l4}\cite{bed1}
Let $A \in \mathcal{S} _{\alpha}.$ If $ f \in \mathbf{m}, $ then

$$f(\Re A) \leq \Re (f(A)) \leq \sec^{2} \alpha f(\Re A). $$

\end{lemma}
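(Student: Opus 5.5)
The plan is to reduce the statement to the case of the elementary operator monotone functions $t\mapsto \frac{t}{\lambda+t}$ by means of the integral representation of $f\in\mathbf m$. Every such $f$ can be written as
$$f(t)=a+bt+\int_{(0,\infty)}\frac{(1+\lambda)\,t}{\lambda+t}\,d\mu(\lambda),\qquad a,b\ge 0,$$
with $\mu$ a positive measure. For accretive $A$ this gives
$$\Re f(A)=aI+b\Re A+\int (1+\lambda)\,\Re\big[A(\lambda+A)^{-1}\big]\,d\mu(\lambda).$$
The constant term satisfies $aI\le aI\le \sec^2\alpha\, aI$, and the linear term is exactly $b\Re A$. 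So it suffices to prove, for each $\lambda>0$,
$$\Re A(\lambda+\Re A)^{-1}\le \Re\big[A(\lambda+A)^{-1}\big]\le \sec^2\alpha\,\Re A(\lambda+\Re A)^{-1}.$$
Integrating this against $(1+\lambda)\,d\mu(\lambda)$ then yields the lemma.

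The tool is the standard inversion inequality for a sectorial $B\in\mathcal S_\alpha$:
$$\cos^2\alpha\,(\Re B)^{-1}\le \Re B^{-1}\le (\Re B)^{-1}.$$
This is the case $t=-1$ of \eqref{cho-2}. Alternatively, one writes $B=(\Re B)^{1/2}(I+iK)(\Re B)^{1/2}$ with $K$ Hermitian and $\|K\|\le\tan\alpha$. Then $\Re B^{-1}=(\Re B)^{-1/2}(I+K^2)^{-1}(\Re B)^{-1/2}$, and $\cos^2\alpha\, I\le (I+K^2)^{-1}\le I$.

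For the lower bound, take $B=\lambda+A$, which still lies in $\mathcal S_\alpha$ and has $\Re B=\lambda+\Re A$. Since $A(\lambda+A)^{-1}=I-\lambda(\lambda+A)^{-1}$, the inequality $\Re(\lambda+A)^{-1}\le(\lambda+\Re A)^{-1}$ gives
$$\Re\big[A(\lambda+A)^{-1}\big]\ge I-\lambda(\lambda+\Re A)^{-1}=\Re A(\lambda+\Re A)^{-1}.$$

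For the upper bound, the same substitution is too lossy: a scalar check shows the resulting constant fails. So I would instead write $A(\lambda+A)^{-1}=(I+\lambda A^{-1})^{-1}$ and put $C=I+\lambda A^{-1}$. Then $C\in\mathcal S_\alpha$, because $W(A^{-1})\subset S_\alpha$, and $\Re C=I+\lambda\Re A^{-1}$. The chain of estimates is:
$$\Re C^{-1}\le(\Re C)^{-1}=(I+\lambda\Re A^{-1})^{-1}\le \big(I+\lambda\cos^2\alpha\,(\Re A)^{-1}\big)^{-1}.$$
The first step is the inversion inequality applied to $C$. The last step uses the inversion inequality for $A$, namely $\Re A^{-1}\ge\cos^2\alpha(\Re A)^{-1}$, together with the operator monotonicity of $t\mapsto -t^{-1}$.

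It remains to compare $\big(I+\lambda\cos^2\alpha\,X\big)^{-1}$ with $\sec^2\alpha\,(I+\lambda X)^{-1}$, where $X=(\Re A)^{-1}>0$. Both are functions of the single operator $X$, so this reduces to the scalar inequality $(1+cx)^{-1}\le c^{-1}(1+x)^{-1}$ for $c=\cos^2\alpha$ and $x>0$. That inequality is equivalent to $1+x\le c^{-1}+x$, which is true. Since $\sec^2\alpha(I+\lambda X)^{-1}=\sec^2\alpha\,\Re A(\lambda+\Re A)^{-1}$, the upper bound follows.

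The main obstacle is exactly this upper bound. One has to avoid losing a factor in both directions, and the trick is to choose the right resolvent form, $(I+\lambda A^{-1})^{-1}$ rather than $I-\lambda(\lambda+A)^{-1}$, so that only one of the two inversion inequalities costs $\cos^2\alpha$. The remaining technical points are routine:

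\begin{itemize}
\item checking that $\mathcal S_\alpha$ is closed under inversion and under adding positive scalars;
\item justifying that $\Re$ passes through the integral, since the integrand is norm-continuous and bounded by $(1+\lambda)\min\{1,\|A\|\,\|(\lambda+A)^{-1}\|\}$, which is $\mu$-integrable because $f(1)$ is finite.
\end{itemize}
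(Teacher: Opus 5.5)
Your proof is correct and complete. The paper gives no argument of its own for this lemma; it is quoted from \cite{bed1}. So the useful comparison is with the other quoted results.

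Your kernel is a weighted harmonic mean: with $\lambda=s/(1-s)$,
\[
(1+\lambda)A(\lambda+A)^{-1}=\big((1-s)I+sA^{-1}\big)^{-1}=I\,!_s\,A .
\]
Hence your per-$\lambda$ estimate is exactly the harmonic-mean case of Lemma~\ref{l.sig} with first argument $I$. With the paper's conventions, the whole statement is Lemma~\ref{l.sig} applied to the pair $(I,A)$, where $\sigma$ is the mean whose representing function is $f$. This works because $I\in\mathcal S_\alpha$, $\Re I=I$ and $I\sigma A=f(A)$. Within the paper's framework the lemma is therefore a one-line corollary of Lemma~\ref{l.sig}.

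Your route instead derives everything from the single inequality $\cos^2\alpha\,(\Re B)^{-1}\le\Re B^{-1}\le(\Re B)^{-1}$, which you also prove directly. It therefore does not rely on the theory of means of accretive matrices. Your use of two different resolvent identities, $I-\lambda(\lambda+A)^{-1}$ for the lower bound and $(I+\lambda A^{-1})^{-1}$ for the upper, makes clear why the lower bound is loss-free while the upper bound costs exactly one factor $\sec^2\alpha$. Your scalar check that the first identity cannot yield the upper bound for large $\lambda$ is also right.

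The one point you should state rather than assume is what $f(A)$ means for non-Hermitian $A$; the paper never specifies this. If $f(A)$ is defined through the integral representation, your formula for $\Re f(A)$ is simply the definition. If it is defined by the holomorphic functional calculus, add three remarks:
\begin{itemize}
\item the representation of $f$ persists on $\mathbb{C}\setminus(-\infty,0]$ by analytic continuation;
\item the spectrum of $A$ lies in $W(A)$, hence in the open right half-plane;
\item a routine Fubini argument in the Cauchy integral then shows that $f(A)=aI+bA+\int(1+\lambda)A(\lambda+A)^{-1}\,d\mu(\lambda)$.
\end{itemize}
Your uniform bound on the integrand is valid, since $\|A(\lambda+A)^{-1}\|\le 1$ for accretive $A$, so the rest goes through as written.
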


\begin{lemma}\label{l5}\cite{bed1}
Let $A \in \mathcal{S} _{\alpha}.$ If $ f \in \mathbf{m}, $ then

$$f( \Vert \Re A \Vert ) \leq \Vert \Re (f(A)) \Vert \leq \sec^{2} \alpha f(\Vert \Re A \Vert). $$

\end{lemma}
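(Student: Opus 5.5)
The plan is to reduce the statement to Lemma \ref{l4} by taking norms, and then to identify $\Vert f(\Re A)\Vert$ with $f(\Vert \Re A\Vert)$ by spectral calculus. Since $A\in\mathcal S_\alpha$, the real part $\Re A$ is positive definite, so $f(\Re A)$ is defined. Lemma \ref{l4} gives the chain
\begin{equation*}
0< f(\Re A)\leq \Re (f(A))\leq \sec^2\alpha\, f(\Re A).
\end{equation*}
In particular, $\Re(f(A))$ is itself positive definite.

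Next I use that the operator norm is monotone on positive semidefinite matrices: if $0\leq X\leq Y$, then $\Vert X\Vert=\sup_{\|x\|=1}\langle Xx,x\rangle\leq \sup_{\|x\|=1}\langle Yx,x\rangle=\Vert Y\Vert$. Applying this to both inequalities above yields
\begin{equation*}
\Vert f(\Re A)\Vert\leq \Vert \Re(f(A))\Vert\leq \sec^2\alpha\,\Vert f(\Re A)\Vert .
\end{equation*}

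It remains to show $\Vert f(\Re A)\Vert=f(\Vert\Re A\Vert)$. Write the spectrum of $\Re A$ as $0<\lambda_1\leq\cdots\leq\lambda_n$, so that $\Vert \Re A\Vert=\lambda_n$. The spectrum of $f(\Re A)$ is $\{f(\lambda_j)\}$, and these values are positive because $f$ maps $(0,\infty)$ into $(0,\infty)$. Every matrix monotone function is in particular an increasing scalar function (test it on $1\times 1$ matrices). Hence
\begin{equation*}
\Vert f(\Re A)\Vert=\max_j f(\lambda_j)=f(\lambda_n)=f(\Vert \Re A\Vert).
\end{equation*}
Substituting this into the displayed norm inequalities gives the claim.

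There is no serious obstacle here, since all the analytic work is contained in Lemma \ref{l4}. The only points needing care are the positivity of $\Re(f(A))$, which is needed to use norm monotonicity, and the scalar monotonicity of $f$; both are handled above.
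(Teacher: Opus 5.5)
Your proof is correct and complete. The paper gives no argument of its own for this lemma; it simply cites \cite{bed1}. Your route is the standard derivation of this norm version from the Loewner-order version in Lemma \ref{l4}: take norms using monotonicity of the operator norm on positive matrices, then use $\Vert f(\Re A)\Vert=f(\Vert \Re A\Vert)$, which holds because $f$ is positive and nondecreasing.
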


\begin{lemma}\label{l6}\cite{zha3}
Let $A \in \mathcal{S} _{\alpha}.$ Then

$$ \cos\alpha \vert\Vert  A \Vert\vert \leq \vert\Vert \Re (A) \Vert\vert \leq  \vert\Vert  A \Vert\vert. $$

\end{lemma}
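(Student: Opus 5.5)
The statement to prove is Lemma~\ref{l6}: for $A\in\mathcal S_\alpha$ and every unitarily invariant norm $\vert\Vert\cdot\Vert\vert$, $\cos\alpha\,\vert\Vert A\Vert\vert\le\vert\Vert\Re A\Vert\vert\le\vert\Vert A\Vert\vert$. The plan is to handle the two inequalities separately. The right-hand one is immediate. The left-hand one goes through a congruence factorization of $A$ over $\Re A$, followed by a Cauchy--Schwarz type inequality for unitarily invariant norms and Weyl's monotonicity principle.

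\emph{Upper bound.} Since $\Re A=\frac{A+A^*}{2}$ and $\vert\Vert A^*\Vert\vert=\vert\Vert A\Vert\vert$, the triangle inequality gives $\vert\Vert \Re A\Vert\vert\le \frac12(\vert\Vert A\Vert\vert+\vert\Vert A^*\Vert\vert)=\vert\Vert A\Vert\vert$.

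\emph{Lower bound: factorization.} Write $R=\Re A>0$ and $S=\Im A$, taken Hermitian so that $A=R+iS$. The hypothesis $W(A)\subset S_\alpha$ says exactly that $|x^*Sx|\le \tan\alpha\; x^*Rx$ for all $x$. Set $C=R^{-1/2}SR^{-1/2}$. Substituting $x=R^{-1/2}y$ gives $|y^*Cy|\le\tan\alpha\,\|y\|^2$, so $C$ is Hermitian with $\|C\|\le\tan\alpha$. Diagonalize $C=UDU^*$ with $D$ real diagonal and $|D|\le\tan\alpha$. Then
\[
A=R^{1/2}(I+iC)R^{1/2}=R^{1/2}U(I+iD)U^*R^{1/2}.
\]
Write $I+iD=(I+D^2)^{1/4}Z(I+D^2)^{1/4}$, where $Z$ is a diagonal unitary. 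With $X=R^{1/2}U(I+D^2)^{1/4}$ this gives
\[
A=XZX^*,\qquad XX^*=R^{1/2}U(I+D^2)^{1/2}U^*R^{1/2}\le \sec\alpha\,R,
\]
because $(I+D^2)^{1/2}\le\sqrt{1+\tan^2\alpha}\,I=\sec\alpha\, I$.

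\emph{Lower bound: norm estimate.} Apply the Cauchy--Schwarz inequality for unitarily invariant norms, $\vert\Vert B^*YC\Vert\vert\le \|Y\|\,\vert\Vert B^*B\Vert\vert^{1/2}\vert\Vert C^*C\Vert\vert^{1/2}$, with $B=C=X^*$ and $Y=Z$. Since $\|Z\|=1$, this yields $\vert\Vert A\Vert\vert\le\vert\Vert XX^*\Vert\vert$. Next, $0\le XX^*\le\sec\alpha\,R$, so by Weyl's monotonicity principle $\lambda_j(XX^*)\le\sec\alpha\,\lambda_j(R)$ for every $j$. For positive matrices eigenvalues are singular values, so the Fan dominance theorem gives $\vert\Vert XX^*\Vert\vert\le\sec\alpha\,\vert\Vert R\Vert\vert$. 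Combining the two bounds gives $\cos\alpha\,\vert\Vert A\Vert\vert\le\vert\Vert\Re A\Vert\vert$.

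The step needing the most care is the factorization $A=XZX^*$ with $XX^*\le\sec\alpha\,\Re A$. In particular, the $(I+D^2)^{1/4}$ must be split symmetrically so that the middle factor is exactly unitary. Once this is in place, the rest is standard norm machinery.
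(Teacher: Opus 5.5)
Your proof is correct. The paper gives no proof of this lemma and only cites it, so the relevant comparison is the cited source's argument, and your route is essentially that one: your factorization $A=XZX^*$ with $Z$ a diagonal unitary is Zhang's sectorial decomposition, and your bound $XX^*\le\sec\alpha\,\Re A$ is the same statement as $\Re A=X(\Re Z)X^*\ge\cos\alpha\,XX^*$. Your closing step $\vert\Vert XZX^*\Vert\vert\le\vert\Vert XX^*\Vert\vert$, obtained via the Cauchy--Schwarz inequality for unitarily invariant norms, is valid; it also follows directly from Horn's inequality $\prod_{j\le k}s_j(XZX^*)\le\prod_{j\le k}s_j(X)^2$.
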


\begin{lemma}\label{l7}\cite{Dur2}
Let $A \in \mathcal{S} _{\alpha}$ and $ t \in (0, 1). $ Then 
$ A^{t} \in \mathcal{S} _{t \alpha}. $ Also note that $ A^{-t} \in \mathcal{S} _{t \alpha}, $ by the result indicates that if $ A \in \mathcal{S} _{\alpha} $ then $ A^{-1} \in \mathcal{S} _{\alpha}. $
\end{lemma}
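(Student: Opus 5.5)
The plan has two parts: first the inverse statement, which is elementary; then the power statement, which carries the real content. Once both are in hand, $A^{-t}=(A^{-1})^{t}\in\mathcal{S}_{t\alpha}$ follows by applying the power statement to $A^{-1}$.

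For the inverse, take a unit vector $x$ and set $y=A^{-1}x\neq 0$. Then
$$x^*A^{-1}x=(Ay)^*y=y^*A^*y=\overline{y^*Ay}=\|y\|^2\,\overline{\Big\langle A\tfrac{y}{\|y\|},\tfrac{y}{\|y\|}\Big\rangle}.$$
The sector $S_\alpha$ is invariant under complex conjugation and under multiplication by positive scalars. Hence $W(A^{-1})\subset S_\alpha$, that is, $A^{-1}\in\mathcal{S}_\alpha$.

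For the power statement, I would use the characterization that $A\in\mathcal{S}_\alpha$ if and only if $\Re A>0$ and both $\Re(e^{i\alpha}A)\ge 0$ and $\Re(e^{-i\alpha}A)\ge 0$. The target is then $\Re(e^{\pm it\alpha}A^t)\ge 0$, with $\Re A^t>0$ coming for free from the same argument. I would start from the Balakrishnan integral representation of the principal power,
$$A^t=\frac{\sin t\pi}{\pi}\int_0^\infty s^{t-1}A(sI+A)^{-1}\,ds,\qquad 0<t<1,$$
which is valid because $\sigma(A)\subset W(A)\subset S_\alpha$ avoids $(-\infty,0]$. Put $y=(sI+A)^{-1}x$. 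Then
$$x^*A(sI+A)^{-1}x=y^*(sI+A)^*Ay=s\,y^*Ay+\|Ay\|^2 .$$
This expression lies in $S_\alpha$ and has positive real part. Integrating gives $\Re A^t>0$ and $A^t\in\mathcal{S}_\alpha$, but only with the angle $\alpha$.

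The main obstacle is sharpening the angle from $\alpha$ to $t\alpha$. The plain Balakrishnan integrand does not improve the angle, so a sharper argument is needed. I would first try Kato's method for m-sectorial operators (Kato, \emph{Perturbation Theory}, Thm.~V.3.35). The idea is to deform the integration ray to $s\in e^{\pm i(\pi-\alpha)}\cdot(0,\infty)$, or equivalently to rotate the variable so that the kernel $s^{t-1}$ supplies a phase of exactly $e^{\mp it(\pi-\alpha)}$. One then pairs this phase with the sector information on $A(sI+A)^{-1}$ along the rotated ray, and checks that the arguments of the integrand stay within $[-t\alpha,t\alpha]$ after the prefactor is included. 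As a fallback I would use a homotopy argument: the set of $t\in[0,1]$ with $W(A^t)\subset S_{t\alpha}$ is closed and contains $t=0$ and $t=1$. Combined with the semigroup identity $A^{t/2}=(A^t)^{1/2}$ and the square-root case $W(B^{1/2})\subset S_{\alpha/2}$ whenever $B\in\mathcal{S}_\alpha$, this covers all dyadic $t$, and density together with continuity of $t\mapsto A^t$ finishes the proof. The square-root case is where I expect the real work. I would attempt it by writing $x=B^{1/2}u$, so that $x^*B^{1/2}x=u^*B^{1/2*}Bu\cdots$, and comparing arguments, but citing Kato's theorem directly is the safe route.
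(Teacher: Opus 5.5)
\textbf{There is a genuine gap: the angle $t\alpha$ is never established.} Your treatment of $A^{-1}$, the reduction $A^{-t}=(A^{-1})^{t}$, and the bound $\Re A^{t}>0$ from the Balakrishnan integral are all correct. But the real content of the lemma is the sharp angle $t\alpha$, and the paper only quotes this result without proof. None of your three routes reaches it.

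\emph{The contour rotation fails as described.} On the rays $\arg s=\pm(\pi-\alpha)$ the resolvent $(sI+A)^{-1}$ can be singular: an eigenvalue $\lambda$ of $A$ on $\arg z=\pm\alpha$ puts a pole at $-\lambda$ on one of these rays. Even formally, the phase bookkeeping there gives only $\arg x^*A^tx\le t(\pi-\alpha)$. More fundamentally, no single ray keeps the integrand's argument inside $[-t\alpha,t\alpha]$. Take $A$ normal with eigenvalues $e^{\pm i\alpha}$ and $x$ a unit eigenvector. On the ray $\arg s=\psi$, the argument of the integrand tends to $\pm\alpha-(1-t)\psi$ as $|s|\to\infty$. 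This would force both $\psi\ge\alpha$ and $\psi\le-\alpha$.

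\emph{The fallback fails too.} Halving from $t=1$ produces only $t=2^{-k}$, so closedness gives nothing beyond $\{0\}\cup\{2^{-k}:k\ge 0\}$. Reaching, say, $t=3/4$ would need control of $A^{3/2}$, or of $A^{1/2}\sharp A$ with angle $\tfrac34\alpha$; the known mean results only give $\mathcal{S}_\alpha$. The square-root case the fallback rests on is also left unproven.

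\emph{Citing Kato does not close it either, unless the cited statement is the sectorial one.} The m-accretive version (semi-angle $t\pi/2$) yields the lemma only after applying it to $e^{\pm i(\pi/2-\alpha)}A$ and intersecting the two resulting sectors. Your proposal does not contain that step.

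\textbf{The missing idea.} Each rotated ray gives only a \emph{one-sided} bound, and the right rays are $\arg s=\pm\alpha$. Since $\sigma(A)\subset\overline{S_\alpha}$ and $\alpha<\pi-\alpha$, the map $s\mapsto s^{t-1}A(sI+A)^{-1}$ is analytic in $|\arg s|<\pi-\alpha$. It is $O(|s|^{t-1})$ at $0$ and $O(|s|^{t-2})$ at $\infty$, so Cauchy's theorem gives
\[
A^{t}=\frac{\sin t\pi}{\pi}\,e^{\pm it\alpha}\int_0^\infty r^{t-1}A\bigl(re^{\pm i\alpha}I+A\bigr)^{-1}\,dr .
\]
Put $y=(re^{i\alpha}I+A)^{-1}x$. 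Then $x^*A(re^{i\alpha}I+A)^{-1}x=re^{-i\alpha}y^*Ay+\|Ay\|^2$. This lies in the closed cone $\{z:\arg z\in[-2\alpha,0]\}\cup\{0\}$, which is convex because $2\alpha<\pi$. Hence $\arg x^*A^tx\in[t\alpha-2\alpha,\,t\alpha]$. The ray $e^{-i\alpha}$ likewise gives $\arg x^*A^tx\in[-t\alpha,\,2\alpha-t\alpha]$. Intersect the two, and use $\Re x^*A^tx>0$ from your unrotated integral. This gives $x^*A^tx\in S_{t\alpha}$, which is what the lemma needs.
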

\begin{lemma}\cite{Abu}\label{l8}
Let $ A, B \in \mathbb{M}_{n} $ be positive matrices.Then
\begin{equation*}
\omega \bigg(  \begin{bmatrix}
0& A\\
B & 0
\end{bmatrix}  \bigg)=\frac{1}{2} \Vert  A+B \Vert.
\end{equation*}
\end{lemma}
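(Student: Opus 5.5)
The plan is to compute $\omega$ of the block matrix directly from Proposition~\ref{prop_yamazaki}, namely $\omega(T)=\sup_{\theta}\|\Re(e^{i\theta}T)\|$. Put $T=\begin{bmatrix}0&A\\B&0\end{bmatrix}$. Since $A$ and $B$ are positive, they are self-adjoint, so $T^*=\begin{bmatrix}0&B\\A&0\end{bmatrix}$. Hence
\[
\Re(e^{i\theta}T)=\frac12\begin{bmatrix}0& e^{i\theta}A+e^{-i\theta}B\\ e^{i\theta}B+e^{-i\theta}A&0\end{bmatrix}.
\]
This is a Hermitian off-diagonal block matrix of the form $\begin{bmatrix}0&C\\C^*&0\end{bmatrix}$ with $C=e^{i\theta}A+e^{-i\theta}B$. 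Such a matrix has norm $\|C\|$: its square is $C C^*\oplus C^*C$, or equivalently its eigenvalues are $\pm$ the singular values of $C$. Therefore
\[
\omega(T)=\frac12\sup_\theta\|e^{i\theta}A+e^{-i\theta}B\|=\frac12\sup_{|z|=1}\|A+zB\|.
\]

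The lower bound $\omega(T)\ge \frac12\|A+B\|$ follows by taking $z=1$, i.e.\ $\theta=0$. For the upper bound, it remains to show $\|A+zB\|\le\|A+B\|$ for every unimodular $z$. This is the key step, and it is where positivity is used. I would argue via the bilinear form: for unit vectors $x,y$,
\[
|\langle (A+zB)x,y\rangle|\le |\langle Ax,y\rangle|+|\langle Bx,y\rangle|
\le \langle Ax,x\rangle^{1/2}\langle Ay,y\rangle^{1/2}+\langle Bx,x\rangle^{1/2}\langle By,y\rangle^{1/2}.
\]
The second inequality is the Cauchy--Schwarz inequality for the positive semidefinite forms induced by $A$ and $B$. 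Applying the Cauchy--Schwarz inequality in $\mathbb R^2$ then bounds the right-hand side by
\[
\langle (A+B)x,x\rangle^{1/2}\langle (A+B)y,y\rangle^{1/2}\le\|A+B\|.
\]
Taking the supremum over unit vectors $x,y$ gives $\|A+zB\|\le\|A+B\|$.

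Combining the two bounds yields $\omega(T)=\frac12\|A+B\|$. The only delicate point is the norm comparison $\|A+zB\|\le\|A+B\|$, and the two-step Cauchy--Schwarz argument above handles it. The remaining steps are routine block computations.
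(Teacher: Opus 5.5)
Your proof is correct and complete. The paper gives no argument for this lemma; it imports it from \cite{Abu}, so there is no internal proof to compare against. Your derivation is a self-contained proof that uses only Proposition~\ref{prop_yamazaki}.

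The first half of your argument never uses positivity. The same block computation gives, for arbitrary $X,Y\in\mathbb{M}_n$,
\[
\omega\left(\begin{bmatrix}0&X\\Y&0\end{bmatrix}\right)=\frac12\sup_{\theta}\|e^{i\theta}X+e^{-i\theta}Y^*\|,
\]
so positivity enters only in the comparison $\|A+zB\|\le\|A+B\|$ for $|z|=1$. Your two-step Cauchy--Schwarz argument for that comparison is sound. An equivalent operator-level form is the factorization $A+zB=\begin{bmatrix}A^{1/2}&B^{1/2}\end{bmatrix}\begin{bmatrix}A^{1/2}\\ zB^{1/2}\end{bmatrix}$, in which both factors have norm $\|A+B\|^{1/2}$.

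Your route also shows why the hypothesis is needed. Take $A=I$ and $B=-I$. Then $\|A+B\|=0$, but the block matrix is unitary, so its numerical radius is $1$: the supremum over $z$ is no longer attained at $z=1$.
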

The following lemma is well known.
\begin{lemma}\label{l9}
Let $ A, B \in \mathbb{M}_{n}. $ Then
\begin{equation*}
 \bigg\Vert  \begin{bmatrix}
0& A\\
B & 0
\end{bmatrix}  \bigg\Vert=\max \lbrace   \Vert  A \Vert, \Vert  B \Vert \rbrace.
\end{equation*}
\end{lemma}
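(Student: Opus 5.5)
The statement is Lemma~\ref{l9}: for arbitrary $A,B\in\mathbb{M}_n$, the norm of $T=\begin{bmatrix}0&A\\B&0\end{bmatrix}$ equals $\max\{\|A\|,\|B\|\}$. My plan is a direct computation: factor $T$ into a permutation and a block diagonal matrix, and use that unitary multiplication preserves the operator norm and that a block diagonal matrix has norm equal to the largest norm of its blocks.

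First, I factor
\[
T=\begin{bmatrix}0&A\\B&0\end{bmatrix}=\begin{bmatrix}0&I\\I&0\end{bmatrix}\begin{bmatrix}B&0\\0&A\end{bmatrix}.
\]
The swap matrix $J=\begin{bmatrix}0&I\\I&0\end{bmatrix}$ is unitary and the operator norm is unitarily invariant, so $\|T\|=\|\operatorname{diag}(B,A)\|$.

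Next, I compute the norm of the block diagonal matrix. For $x=(x_1,x_2)\in\mathbb{C}^{2n}$,
\[
\|\operatorname{diag}(B,A)x\|^2=\|Bx_1\|^2+\|Ax_2\|^2\le \max\{\|A\|,\|B\|\}^2\big(\|x_1\|^2+\|x_2\|^2\big),
\]
which gives the upper bound $\|\operatorname{diag}(B,A)\|\le\max\{\|A\|,\|B\|\}$. For the reverse inequality, I take $x=(x_1,0)$ with $x_1$ a unit vector satisfying $\|Bx_1\|=\|B\|$, and $x=(0,x_2)$ with $x_2$ a unit vector satisfying $\|Ax_2\|=\|A\|$. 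Such maximizers exist because $\mathbb{M}_n$ is finite dimensional, so the unit sphere is compact. These two choices give $\|\operatorname{diag}(B,A)\|\ge\|B\|$ and $\ge\|A\|$.

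There is no real obstacle. The only point needing care is checking that $J$ is unitary; this follows from $J^*=J$ and $J^2=I_{2n}$.
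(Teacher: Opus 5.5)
Your proof is correct. The factorization $T=J\operatorname{diag}(B,A)$ with $J=\begin{bmatrix}0&I\\I&0\end{bmatrix}$ checks out, and $J$ is unitary because $J^*=J$ and $J^2=I_{2n}$. The two-sided estimate for the block diagonal matrix then gives exactly $\max\{\|A\|,\|B\|\}$.

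The paper has no proof to compare with. It introduces the lemma as ``well known'' without argument or citation, so your argument supplies what the paper omits.

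An equally short alternative is to compute $T^*T=\operatorname{diag}(B^*B,A^*A)$ and use $\|T\|^2=\|T^*T\|$. This avoids the permutation. It still rests on the same fact that a block diagonal matrix has norm equal to the largest norm of its blocks; for these positive blocks, that is just a statement about the largest eigenvalue.

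One minor wording point: the compactness you need is that of the unit sphere of $\mathbb{C}^n$, not a property of $\mathbb{M}_n$. Exact maximizers are not needed anyway, since unit vectors $x_1$ with $\|Bx_1\|>\|B\|-\varepsilon$ give the lower bound just as well.
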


\begin{lemma}\label{l10}
Let $ A, B \in \mathbb{M}_{n}$  be positive. For operator mean $ \sigma $
$$  \Vert\vert A \sigma B \vert\Vert \leq \Vert\vert A \vert\Vert \sigma  \Vert\vert B \vert\Vert. $$
\end{lemma}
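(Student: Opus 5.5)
The plan is to linearize the mean from above using concavity of its representing function, and then pass to norms through monotonicity of unitarily invariant norms on positive matrices. Let $f\in\mathbf{m}$ be the representing function of $\sigma$. Put $a=\Vert\vert A\vert\Vert$ and $b=\Vert\vert B\vert\Vert$. For scalars the right-hand side means $a\sigma b=af(b/a)$.

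First I assume $A>0$ and $B>0$, so that $a,b>0$. The general positive semidefinite case will follow by replacing $A,B$ with $A+\varepsilon I,\ B+\varepsilon I$ and letting $\varepsilon\downarrow 0$. This works because the Kubo--Ando mean of semidefinite matrices is the decreasing limit of these means, the norm is continuous, and the scalar mean $(a,b)\mapsto a\sigma b$ is continuous on $(0,\infty)^2$.

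The key step uses the fact that an operator monotone $f$ on $(0,\infty)$ is real analytic and concave, as recalled in the introduction via \cite[Corollary 1.12]{pec}. Hence for every $t>0$ and $x>0$ we have the tangent-line bound
$$f(x)\le f(t)+f'(t)(x-t)=\bigl(f(t)-tf'(t)\bigr)+f'(t)x .$$
Both coefficients are nonnegative. Indeed $f'(t)\ge 0$ since $f$ is monotone. Also $f(t)-tf'(t)\ge \lim_{s\downarrow0}f(s)\ge 0$ by concavity, because the tangent line at $t$ lies above the graph and hence above $f(0^+)\ge0$ at $0$.

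Applying this scalar inequality in the functional calculus to the positive matrix $X=A^{-1/2}BA^{-1/2}$ gives $f(X)\le (f(t)-tf'(t))I+f'(t)X$. Conjugating by $A^{1/2}$ then yields
$$0\le A\sigma B\le \bigl(f(t)-tf'(t)\bigr)A+f'(t)B .$$

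Next I use the fact that $0\le Y\le Z$ implies $\Vert\vert Y\vert\Vert\le\Vert\vert Z\vert\Vert$ for every unitarily invariant norm. This holds by Weyl's monotonicity, $\lambda_j(Y)\le\lambda_j(Z)$, together with the Fan dominance principle. Combining this with the triangle inequality and the nonnegativity of the coefficients gives
$$\Vert\vert A\sigma B\vert\Vert\le \bigl(f(t)-tf'(t)\bigr)a+f'(t)b=a\Bigl[f(t)+f'(t)\Bigl(\tfrac ba-t\Bigr)\Bigr].$$
Since $t>0$ is free, I choose $t=b/a$. The right-hand side then collapses to $af(b/a)=\Vert\vert A\vert\Vert\,\sigma\,\Vert\vert B\vert\Vert$, which is the claim.

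The main point needing care is the nonnegativity of $f(t)-tf'(t)$. Without it the triangle inequality step fails, since we could not bound the norm of a combination with a negative coefficient. This is settled by concavity together with $f>0$. The approximation argument for singular $A$ is routine.
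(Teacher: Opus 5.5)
Your argument is correct. The paper gives no proof of Lemma~\ref{l10}: it is listed among the preliminaries without proof or reference. It is used only once, in the section on operator means, for the operator norm and the positive definite matrices $\Re A,\Re B$. So there is no argument in the paper to compare against, and yours supplies a complete, self-contained proof.

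The key idea is that $a\sigma b=af(b/a)$ is the infimum over $t>0$ of the affine majorants $(f(t)-tf'(t))a+f'(t)b$. Each scalar majorant $f(x)\le c_0+c_1x$ transfers to $A\sigma B\le c_0A+c_1B$ by functional calculus and congruence. This needs only scalar concavity and monotonicity of $f$, plus the monotonicity of unitarily invariant norms on the positive cone, so it covers every unitarily invariant norm at once. For $\sigma=\sharp$ it reduces to the familiar trick $A\sharp B\le\frac12(sA+s^{-1}B)$.

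One small point in your approximation step: continuity of the scalar mean on $(0,\infty)^2$ only handles the case $A\neq0\neq B$. When $A=0$ or $B=0$, the limit point of $(\Vert\vert A+\varepsilon I\vert\Vert,\Vert\vert B+\varepsilon I\vert\Vert)$ lies on the boundary of the quadrant. Two easy fixes:
\begin{itemize}
\item Check directly that $0\sigma B=\beta B$ with $\beta=\lim_{x\to\infty}f(x)/x$, and $A\sigma 0=f(0^+)A$, so equality holds in these cases.
\item Argue uniformly by monotonicity. With $a=\Vert\vert A\vert\Vert$, $b=\Vert\vert B\vert\Vert$, $c=\Vert\vert I\vert\Vert$, the definite case gives $\Vert\vert A\sigma B\vert\Vert\le\Vert\vert (A+\varepsilon I)\sigma(B+\varepsilon I)\vert\Vert\le (a+\varepsilon c)\sigma(b+\varepsilon c)$. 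Letting $\varepsilon\downarrow0$ yields $a\sigma b$ by the very definition of the mean on nonnegative scalars.
\end{itemize}
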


\begin{lemma}\label{l11}
Let $A,  B \in \mathcal{S} _{\alpha}$ and $ t \in (0, 1). $ Then 
$$\cos^{3} \alpha \, \Vert\vert A\sharp B \vert\Vert  \leq  \, \Vert\vert \mathrm{H}_{t}(A,  B )\vert\Vert \leq \dfrac{\sec^{3}\alpha}{2}\Vert\vert A+ B \vert\Vert $$
\end{lemma}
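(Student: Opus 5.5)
The statement to prove is Lemma~\ref{l11}: for $A,B\in\mathcal S_\alpha$ and $t\in(0,1)$,
$$\cos^{3}\alpha\,\Vert\vert A\sharp B\vert\Vert\le \Vert\vert \mathcal H_t(A,B)\vert\Vert\le \tfrac{\sec^3\alpha}{2}\Vert\vert A+B\vert\Vert .$$
The plan is to pass to real parts, where everything is positive definite and classical mean inequalities are available, and then return to the original matrices. Three facts do the transfer. Lemma~\ref{l6} compares $\Vert\vert X\vert\Vert$ with $\Vert\vert \Re X\vert\Vert$ at a cost of $\cos\alpha$. Lemma~\ref{l.sig} compares $\Re(A\sigma B)$ with $\Re A\,\sigma\,\Re B$ at a cost of $\sec^2\alpha$. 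Monotonicity of unitarily invariant norms on positive matrices converts Loewner inequalities into norm inequalities. Lemma~\ref{l.sig} shows that $A\sharp_t B$ and $A\sharp_{1-t}B$ lie in $\mathcal S_\alpha$, so $\mathcal H_t(A,B)\in\mathcal S_\alpha$ by convexity of the sector, and Lemma~\ref{l6} applies to it. Linearity of $\Re$ gives $\Re\mathcal H_t(A,B)=\tfrac12\bigl(\Re(A\sharp_tB)+\Re(A\sharp_{1-t}B)\bigr)$.

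\emph{Upper bound.} First, Lemma~\ref{l6} gives $\Vert\vert\mathcal H_t(A,B)\vert\Vert\le\sec\alpha\,\Vert\vert\Re\mathcal H_t(A,B)\vert\Vert$. Next, apply the right half of Lemma~\ref{l.sig} to both weighted geometric means:
$$\Re\mathcal H_t(A,B)\le \sec^2\alpha\,\mathcal H_t(\Re A,\Re B).$$
For positive matrices the Heinz mean is bounded by the arithmetic mean, because $X\sharp_sY\le X\nabla_sY$ (weighted AM--GM). Averaging over $s=t$ and $s=1-t$ gives $\mathcal H_t(\Re A,\Re B)\le\tfrac12(\Re A+\Re B)=\tfrac12\Re(A+B)$. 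Finally, $A+B\in\mathcal S_\alpha$, so monotonicity of the norm together with $\Vert\vert\Re(A+B)\vert\Vert\le\Vert\vert A+B\vert\Vert$ from Lemma~\ref{l6} yields the constant $\sec\alpha\cdot\sec^2\alpha\cdot\tfrac12$.

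\emph{Lower bound.} Start from $\Vert\vert\mathcal H_t(A,B)\vert\Vert\ge\Vert\vert\Re\mathcal H_t(A,B)\vert\Vert$ (Lemma~\ref{l6}). The left half of Lemma~\ref{l.sig} gives $\Re\mathcal H_t(A,B)\ge\mathcal H_t(\Re A,\Re B)$. For positive matrices the Heinz mean dominates the geometric mean:
$$\tfrac12\bigl(X\sharp_tY+X\sharp_{1-t}Y\bigr)\ge X\sharp Y .$$
To see this, reduce by congruence with $X^{-1/2}$ to the scalar inequality $\tfrac12(\lambda^t+\lambda^{1-t})\ge\lambda^{1/2}$ for $\lambda>0$, which is AM--GM. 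Thus $\Re\mathcal H_t(A,B)\ge\Re A\sharp\Re B$. Applying Lemma~\ref{l.sig} once more to $\sharp$ gives $\Re A\sharp\Re B\ge\cos^2\alpha\,\Re(A\sharp B)$. Lemma~\ref{l6} then gives $\Vert\vert\Re(A\sharp B)\vert\Vert\ge\cos\alpha\,\Vert\vert A\sharp B\vert\Vert$, which accounts for the factor $\cos^3\alpha$.

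The only delicate point is bookkeeping: every use of Lemma~\ref{l6} requires the matrix to lie in $\mathcal S_\alpha$, and every Loewner step must be between positive matrices before norm monotonicity is invoked. The needed sector memberships, for $A\sharp B$, $A\sharp_tB$, $\mathcal H_t(A,B)$ and $A+B$, all follow from Lemma~\ref{l.sig} and the convexity of $S_\alpha$, so I do not expect a genuine obstacle. The Heinz--geometric comparison via congruence is the one nontrivial matrix fact used.
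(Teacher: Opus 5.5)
Your argument is correct. The paper does not actually prove Lemma \ref{l11}. It only recalls it among the preliminaries, without a proof or even a citation, so there is no in-paper argument to compare against.

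Your proof uses exactly the toolkit the paper relies on elsewhere, for instance in the first theorem of the section on operator means:
\begin{itemize}
\item Lemma \ref{l6}, to pass between $X$ and $\Re X$ at a cost of $\cos\alpha$;
\item Lemma \ref{l.sig}, to pass between $\Re(A\sigma B)$ and $\Re A\,\sigma\,\Re B$ at a cost of $\sec^2\alpha$;
\item the Loewner chain $X\sharp Y\le \mathcal H_t(X,Y)\le \frac12(X+Y)$ for positive definite $X,Y$.
\end{itemize}
The constants come out exactly as stated: $\sec\alpha\cdot\sec^2\alpha\cdot\frac12$ above and $\cos\alpha\cdot\cos^2\alpha$ below. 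Every Loewner step is between positive matrices, so norm monotonicity applies. Your reduction of the Heinz--geometric comparison to scalar AM--GM by congruence with $X^{-1/2}$ is also fine.

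There is one small slip in notation. With the paper's convention $X\nabla_s Y=sX+(1-s)Y$, weighted AM--GM reads $X\sharp_s Y\le (1-s)X+sY=X\nabla_{1-s}Y$. Your version $X\sharp_s Y\le X\nabla_s Y$ is false in general: take $X=I$, $Y=\lambda I$ with $s$ close to $1$ and $\lambda$ large. Since you average over $s=t$ and $s=1-t$, the conclusion $\mathcal H_t(X,Y)\le\frac12(X+Y)$ is unaffected.

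You can also shorten the bookkeeping. $\mathcal H_t$ is itself a Kubo--Ando mean, with operator monotone representing function $(x^t+x^{1-t})/2$. So Lemma \ref{l.sig} applies to it directly and gives, in one step, both $\mathcal H_t(A,B)\in\mathcal S_\alpha$ and the two-sided comparison of $\Re\mathcal H_t(A,B)$ with $\mathcal H_t(\Re A,\Re B)$.
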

\section{main results}\vspace{.2cm} \noindent
We now turn to the main results of this paper.
\begin{lemma}\label{main}
Let $ A \in  \mathbb{M}_{n} $ and $ 0 \leq  \nu \leq 1. $ Then
\begin{equation}\label{l-main-eq.2}
\omega_{\nu} (\Re A) \leq \omega_{\nu} ( A). 
\end{equation} 

\end{lemma}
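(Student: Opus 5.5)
The plan is to compute the left-hand side exactly, and then bound it by a single term in the supremum defining $\omega_\nu(A)$ (the one with $\theta=0$).

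\emph{Step 1: evaluate $\omega_\nu(\Re A)$.} Put $H=\Re A$, which is self-adjoint. For every $\theta\in\mathbb R$, Definition \ref{def_weighted_oper} gives
$$\Re_\nu(e^{i\theta}H)=\nu e^{i\theta}H+(1-\nu)e^{-i\theta}H^*=\bigl(\nu e^{i\theta}+(1-\nu)e^{-i\theta}\bigr)H .$$
Hence $\|\Re_\nu(e^{i\theta}H)\|=|\nu e^{i\theta}+(1-\nu)e^{-i\theta}|\,\|H\|$. By the triangle inequality this scalar factor is at most $1$, and it equals $1$ at $\theta=0$. Taking the supremum over $\theta$ in Definition \ref{d1} therefore yields $\omega_\nu(\Re A)=\|\Re A\|$.

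\emph{Step 2: bound $\|\Re A\|$ by $\|\Re_\nu(A)\|$.} By definition $\Re_\nu(A)=\nu A+(1-\nu)A^*$, so its adjoint is
$$\Re_\nu(A)^*=\nu A^*+(1-\nu)A .$$
Adding the two expressions gives $\Re_\nu(A)+\Re_\nu(A)^*=A+A^*=2\Re A$. By the triangle inequality and $\|X^*\|=\|X\|$,
$$\|\Re A\|\le \tfrac12\bigl(\|\Re_\nu(A)\|+\|\Re_\nu(A)^*\|\bigr)=\|\Re_\nu(A)\| .$$
Finally, $\|\Re_\nu(A)\|$ is the $\theta=0$ term in the supremum defining $\omega_\nu(A)$, so it is at most $\omega_\nu(A)$.

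Combining Steps 1 and 2 gives $\omega_\nu(\Re A)=\|\Re A\|\le\|\Re_\nu(A)\|\le\omega_\nu(A)$, which is \eqref{l-main-eq.2}.

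The only point needing care is Step 2. The inequality $\|\Re A\|\le\omega(A)$ alone is not enough, because there is no a priori comparison between $\omega(A)$ and $\omega_\nu(A)$. The identity $\Re_\nu(A)+\Re_\nu(A)^*=2\Re A$ avoids this and reduces the claim to a single instance of the triangle inequality.
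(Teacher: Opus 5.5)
Your proof is correct and uses the same mechanism as the paper. The paper writes $2\Re_\nu(e^{i\theta}\Re A)=\Re_\nu(e^{i\theta}A)+\Re_\nu(e^{i\theta}A^*)$ for every $\theta$, applies the triangle inequality, and then needs Lemma~\ref{l1} to replace $\omega_\nu(A^*)$ by $\omega_\nu(A)$; your exact evaluation $\omega_\nu(\Re A)=\|\Re A\|$ instead reduces everything to the $\theta=0$ case of that same identity, where $\Re_\nu(A^*)=\Re_\nu(A)^*$ and $\|X^*\|=\|X\|$ suffices.

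Your closing remark undersells the identity, though. Applied to $e^{i\theta}A$, it gives $\|\Re(e^{i\theta}A)\|\le\|\Re_\nu(e^{i\theta}A)\|$ for all $\theta$, hence $\omega(A)\le\omega_\nu(A)$ for every $\nu$. So the chain $\|\Re A\|\le\omega(A)\le\omega_\nu(A)$ is also valid.
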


\begin{proof}
For $ \theta \in \mathbb{R},$ we have
\begin{align*}
2\Vert  \Re_{\nu}(e^{i \theta} \Re A)  \Vert &= 2 \Vert  \nu e^{i \theta} \Re A+ (1-\nu)e^{-i \theta} (\Re A)^{*} \Vert\\ 
&= \Vert  \nu  e^{i \theta}  A+\nu e^{i \theta}  A^{*} + (1-\nu)  e^{-i \theta}  A^{*} + (1-\nu)e^{-i \theta}  A \Vert \\
&\leq   \Vert  \nu e^{i \theta}  A+ (1-\nu)e^{-i \theta}  A^{*} \Vert + \Vert \nu e^{i \theta}  A^{*} + (1-\nu)e^{-i \theta}  A  \Vert \\
&= \Vert \Re_{\nu}(e^{i \theta}  A ) \Vert + \Vert \Re_{\nu}(e^{i \theta}  A^{*})  \Vert \\
&\leq \omega_{\nu} ( A)+ \omega_{\nu} ( A^{*})\\
&=2 \omega_{\nu} ( A).
\end{align*}
Then taking the supremum over $\theta$, with consideration of  Definition \ref{d1} and (\ref{e1}), we obtain the desired result. 
\end{proof}

By Lemma \ref{main}, we have $ \omega_{\nu}(  \Re A) \leq \omega_{\nu}(A) $ for every $ A \in \mathbb M_n. $ A  reversed version can be found for sectorial matrices, as follows.
\begin{proposition}
Let $ A \in \mathcal{S} _{\alpha} $  and $ 0\leq \nu \leq 1.$ Then, 

\begin{equation}\label{eq.2.5}
\omega_{\nu}(A) \leq \sec \alpha \, \omega_{\nu}(  \Re A ).
\end{equation}
\end{proposition}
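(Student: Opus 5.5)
Since $\Re A$ is Hermitian and, for $A \in \mathcal{S}_\alpha$, positive definite, I would first compute $\omega_\nu(\Re A)$ exactly. For positive $P$ we have
$$\Re_\nu(e^{i\theta}P)=\nu e^{i\theta}P+(1-\nu)e^{-i\theta}P=\big(\nu e^{i\theta}+(1-\nu)e^{-i\theta}\big)P,$$
so $\|\Re_\nu(e^{i\theta}P)\|=|\nu e^{i\theta}+(1-\nu)e^{-i\theta}|\,\|P\|$. The scalar factor is
$$\sqrt{\cos^2\theta+(2\nu-1)^2\sin^2\theta},$$
whose maximum over $\theta$ is $1$, attained at $\theta=0$. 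Hence $\omega_\nu(\Re A)=\|\Re A\|$. This is consistent with \eqref{eq.equvalent}, since $\omega_\nu(P)\le \|P\|$ and equality holds at $\theta=0$.

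Next I would bound the left-hand side using the right inequality in \eqref{eq.equvalent}, namely $\omega_\nu(A)\le \|A\|$. Then I apply Lemma \ref{l6} with the operator norm, which is unitarily invariant: $\cos\alpha\,\|A\|\le \|\Re A\|$. Chaining these gives
$$\omega_\nu(A)\le \|A\|\le \sec\alpha\,\|\Re A\| = \sec\alpha\,\omega_\nu(\Re A),$$
which is \eqref{eq.2.5}.

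The argument is essentially routine. The only point needing care is the identity $\omega_\nu(\Re A)=\|\Re A\|$, which uses that $\Re A$ is self-adjoint together with the elementary maximization of $|\nu e^{i\theta}+(1-\nu)e^{-i\theta}|$. Lemma \ref{l6} is invoked with the spectral norm, a unitarily invariant norm, so the constant $\sec\alpha$ comes out directly.
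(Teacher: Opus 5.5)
Your proof is correct and follows the paper's argument exactly: the chain $\omega_\nu(A)\le\|A\|\le\sec\alpha\,\|\Re A\|=\sec\alpha\,\omega_\nu(\Re A)$ via Lemma \ref{l6}. Your explicit computation of $|\nu e^{i\theta}+(1-\nu)e^{-i\theta}|$ justifies the identity $\omega_\nu(\Re A)=\|\Re A\|$, which the paper only asserts; the same computation shows this identity holds for every Hermitian matrix, not only positive ones.
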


\begin{proof}
For $ A \in \mathcal{S} _{\alpha} $ since $ \Re A >0, $ then $ \omega_{\nu}( \Re A) = \| \Re A \|. $
So Lemma \ref{l6}, implies
$$    \omega_{\nu}(A) \leq \Vert A \Vert  \leq \sec \alpha \Vert \Re A \Vert =\sec \alpha \omega_{\nu}(  \Re A). $$
\end{proof}

\begin{theorem}\label{t1}
Let $ A \in \mathcal{S} _{\alpha} $  and $ 0\leq \nu \leq 1. $Then
 \begin{equation}\label{eq.2.1}
 \cos\alpha \Vert A\Vert \leq  \omega_{\nu}(A) \leq \Vert A \Vert.
 \end{equation}
In particular,
\begin{equation}\label{eq.2.3}
 \cos\alpha \Vert A\Vert \leq  \omega(A) \leq \Vert A \Vert.
 \end{equation}
 \end{theorem}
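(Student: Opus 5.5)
The upper bound $\omega_{\nu}(A)\leq \Vert A\Vert$ is already part of \eqref{eq.equvalent}, so no new work is needed there. It also follows directly from the definition: $\Vert \Re_{\nu}(e^{i\theta}A)\Vert \leq \nu\Vert A\Vert+(1-\nu)\Vert A^*\Vert=\Vert A\Vert$ for every $\theta$.

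For the lower bound I will chain three facts. Since $A\in\mathcal S_\alpha$, the matrix $\Re A$ is positive definite. For a positive matrix $P$, taking $\theta=0$ in Definition \ref{d1} gives $\Re_{\nu}(P)=\nu P+(1-\nu)P=P$. Hence $\omega_{\nu}(P)\ge \Vert P\Vert$, and combined with \eqref{eq.equvalent} we get $\omega_{\nu}(\Re A)=\Vert \Re A\Vert$; this is the observation already used in the preceding proposition. Lemma \ref{l6}, applied with the operator norm, gives $\cos\alpha\,\Vert A\Vert\leq \Vert \Re A\Vert$. Lemma \ref{main} gives $\omega_\nu(\Re A)\le\omega_\nu(A)$. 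Putting these together,
$$\cos\alpha\,\Vert A\Vert\le \Vert\Re A\Vert=\omega_\nu(\Re A)\le \omega_\nu(A).$$

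The particular case \eqref{eq.2.3} is the choice $\nu=1/2$, using $\omega_{1/2}=\omega$ from Definition \ref{d1}.

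No step is a genuine obstacle. The only point needing care is the identity $\omega_\nu(\Re A)=\Vert\Re A\Vert$ for positive $\Re A$: we need both the choice $\theta=0$ and the upper bound from \eqref{eq.equvalent}. Even if only the inequality $\omega_\nu(\Re A)\ge\Vert\Re A\Vert$ from $\theta=0$ were available, it would suffice for the chain above.
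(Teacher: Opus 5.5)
Your proposal is correct and is essentially the paper's proof. You use the same chain $\cos\alpha\Vert A\Vert\le\Vert\Re A\Vert=\omega_\nu(\Re A)\le\omega_\nu(A)\le\Vert A\Vert$, via Lemma \ref{l6} and Lemma \ref{main}, with $\nu=1/2$ giving the particular case; your justification of $\omega_\nu(\Re A)=\Vert\Re A\Vert$ (taking $\theta=0$ together with the upper bound) just makes explicit a step the paper states without comment.
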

\begin{proof}
Note that  since  $ \Re A > 0, $    then $ \omega_{\nu}(\Re A) = \| \Re A \|. $   Also  Lemma \ref{l6} and Lemma \ref{main}, respectively,  implies
$$ \cos\alpha \Vert  A \Vert      \leq \Vert \Re (A) \Vert= \omega_{\nu}(\Re A) \leq \omega_{\nu} (A) \leq \parallel A \parallel.   $$

Choosing $ \nu=\frac{1}{2} $   yields  (\ref{eq.2.3}).
\end{proof}

\begin{remark}
Notice that for $ \frac{1}{2} \leq \nu \leq 1,$ if $ 0 < \alpha < \cos^{-1}(\nu),$ we have $ \cos \alpha > \nu. $ This means for such $ \alpha, $ 

$$ \nu \Vert A \Vert < \cos \alpha \Vert A \Vert  \leq \omega_{\nu}(A) \leq \Vert A \Vert, $$ 
 which presents a significant refinement of the left-hand side of  inequality  (\ref{eq.equvalent}).
\end{remark}
Theorem \ref{t1}, Lemma \ref{l6} and Lemma \ref{l7}, imply the following desired result.
\begin{corollary}\label{cor.1}
Let $ A \in \mathcal{S} _{\alpha} $  and $ 0\leq \nu \leq 1. $Then for $ t \in (-1, 1), $

$$  \cos t \alpha \Vert A^{t}\Vert \leq  \omega_{\nu}(A^{t}) \leq \Vert A^{t} \Vert. $$
\end{corollary}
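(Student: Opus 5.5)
The plan is to apply Theorem \ref{t1} to the matrix $A^{t}$ in place of $A$. For that, $A^{t}$ has to be sectorial with an explicit half-angle, and Lemma \ref{l7} supplies it. The upper bound $\omega_{\nu}(A^{t})\leq \Vert A^{t}\Vert$ needs no sectoriality: it is the right-hand side of (\ref{eq.equvalent}), valid for every matrix. The work is therefore in the lower bound.

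The argument splits on the sign of $t$.

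\begin{enumerate}
\item For $t=0$ we have $A^{0}=I$. Then $\omega_\nu(I)=\Vert I\Vert=1$, since $\Re_\nu(e^{i\theta}I)=(\nu e^{i\theta}+(1-\nu)e^{-i\theta})I$ has modulus at most $1$, with equality at $\theta=0$. So the statement is trivial.
\item For $t\in(0,1)$, Lemma \ref{l7} gives $A^{t}\in\mathcal S_{t\alpha}$, and $0\le t\alpha<\pi/2$. Theorem \ref{t1}, applied with $\alpha$ replaced by $t\alpha$, then gives
\[
\cos (t\alpha)\,\Vert A^{t}\Vert\leq \omega_\nu(A^{t})\leq \Vert A^{t}\Vert .
\]
\item For $t\in(-1,0)$, write $t=-s$ with $s\in(0,1)$. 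Lemma \ref{l7} states $A^{-s}\in\mathcal S_{s\alpha}$; this follows from $A^{-1}\in\mathcal S_\alpha$ together with $(A^{-1})^{s}=A^{-s}$, using the principal branch. Theorem \ref{t1} then gives the lower bound $\cos(s\alpha)\Vert A^{t}\Vert$. Since cosine is even, $\cos(s\alpha)=\cos(t\alpha)$.
\end{enumerate}

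Alternatively, one can redo the chain in the proof of Theorem \ref{t1} directly:
\[
\cos(t\alpha)\Vert A^t\Vert\le\Vert\Re A^t\Vert=\omega_\nu(\Re A^t)\le\omega_\nu(A^t).
\]
This uses Lemma \ref{l6} for the sector $S_{|t|\alpha}$, positivity of $\Re A^t$, and Lemma \ref{main}.

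The only delicate point is the negative-exponent case: one must make sure the sector angle scales as $|t|\alpha$ and that $A^{-s}$ is the principal power. Here I rely on Lemma \ref{l7} as stated.
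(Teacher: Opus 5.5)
Your proposal is correct and follows the paper's own route: Theorem \ref{t1} is applied to $A^{t}$, with Lemma \ref{l7} placing $A^{t}$ in $\mathcal S_{|t|\alpha}$, and the evenness of cosine handles negative exponents. Your separate treatment of $t=0$ also fills a small gap the paper leaves implicit, since Lemma \ref{l7} only covers exponents in $(0,1)$.
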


In the next results, we present weighted sectorial versions of the well known power inequality.

\begin{equation}\label{p.ineq.}
\omega (A^{k}) \leq \omega^{k}(A),  \,  A \in \mathbb{M}_{n}, \, k =1, 2, \ldots
\end{equation}

The following theorem is a generalization of Theorem 3.1 in \cite{bed2}.
\begin{theorem}
Let $ A \in \mathcal{S} _{\alpha} $  and $ 0\leq \nu, t \leq 1. $Then 
\begin{equation}\label{eq.p.nu}
\cos t\alpha \cos^{t}\alpha \, \omega_{\nu}^{t}(A) \leq \omega_{\nu}(A^{t}) \leq \sec t\alpha \sec^{2t}\alpha \, \omega_{\nu}^{t}(A)
\end{equation}
In particular,
\begin{equation}\label{eq.p.0}
\cos t\alpha \cos^{t}\alpha \, \Vert A\Vert^{t} \leq \Vert A^{t} \Vert \leq \sec t\alpha \sec^{2t}\alpha \, \Vert A\Vert^{t},
\end{equation}
 
\end{theorem}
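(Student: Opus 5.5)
The plan is to compare both $\omega_\nu(A^t)$ and $\omega_\nu^t(A)$ with norms of real parts, and to pass between $\Re A^t$ and $(\Re A)^t$ using Lemma \ref{l2} (or Lemma \ref{l5} with $f(x)=x^t\in\mathbf{m}$). The factor $\cos t\alpha$ (resp.\ $\sec t\alpha$) will come from Theorem \ref{t1} applied to $A^t$, which lies in $\mathcal{S}_{t\alpha}$ by Lemma \ref{l7}. The factor $\cos^t\alpha$ (resp.\ $\sec^{2t}\alpha$) will come from the comparison between $\|A\|$ and $\|\Re A\|$ (Lemma \ref{l6}) and from Lemma \ref{l2}.

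For the lower bound in (\ref{eq.p.nu}), I would chain
\[
\omega_\nu(A^t)\ \ge\ \cos t\alpha\,\|A^t\|\ \ge\ \cos t\alpha\,\|\Re A^t\|\ \ge\ \cos t\alpha\,\|(\Re A)^t\| = \cos t\alpha\,\|\Re A\|^t .
\]
Here the first step is Corollary \ref{cor.1} with $A^t\in\mathcal{S}_{t\alpha}$, the second is Lemma \ref{l6}, and the third is the right inequality $\Re^tA\le \Re A^t$ in (\ref{cho-1}), which gives an inequality of norms because both sides are positive. The last equality holds since $\Re A>0$. Then Lemma \ref{l6} gives $\|\Re A\|\ge\cos\alpha\|A\|$, so $\|\Re A\|^t\ge \cos^t\alpha\,\|A\|^t\ge\cos^t\alpha\,\omega_\nu^t(A)$, the last step by Theorem \ref{t1}.

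For the upper bound, I would use Theorem \ref{t1} for $A^t$ in the form $\|A^t\|\le \sec t\alpha\,\|\Re A^t\|$, which comes from Lemma \ref{l6} since $A^t\in\mathcal{S}_{t\alpha}$. This gives $\omega_\nu(A^t)\le\|A^t\|\le\sec t\alpha\,\|\Re A^t\|$. Next, the left inequality of (\ref{cho-1}), $\cos^{2t}\alpha\,\Re A^t\le (\Re A)^t$, yields $\|\Re A^t\|\le \sec^{2t}\alpha\,\|\Re A\|^t$. Finally, $\|\Re A\|=\omega_\nu(\Re A)\le\omega_\nu(A)$ by Lemma \ref{main}, which closes the chain. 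The particular case (\ref{eq.p.0}) follows by taking $\nu=0$ (or $1$), since then $\omega_\nu=\|\cdot\|$.

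The main obstacle is bookkeeping the constants so that the stated factors $\cos t\alpha\cos^t\alpha$ and $\sec t\alpha\sec^{2t}\alpha$ come out exactly. In particular, on the lower side I must make sure that $\|A^t\|\ge\|\Re A^t\|$ is used, not the sectorial reverse, and that the $\cos^{2t}$ loss from Lemma \ref{l2} appears only in the upper bound. I should also check that Lemma \ref{l2}'s angle ``$\theta$'' is to be read as $\alpha$. The endpoint $t=0$ is trivial, and $t=1$ is consistent with the chain.
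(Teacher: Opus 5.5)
Your proposal is correct and follows the paper's proof essentially step for step. Both chains pass through $\|A^t\|$, $\|\Re A^t\|$ and $\|\Re A\|^t$, using Corollary \ref{cor.1} and Lemma \ref{l6} for $A^t\in\mathcal{S}_{t\alpha}$, the two sides of (\ref{cho-1}), Lemma \ref{l6} for $A$, and Lemma \ref{main}. One caveat concerns your parenthetical alternative: Lemma \ref{l5} with $f(x)=x^t$ would only give the weaker factor $\sec^{2}\alpha$ in the upper bound, so (\ref{cho-1}) is the tool needed there to get $\sec^{2t}\alpha$.
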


\begin{proof}
Let $ 0\leq \nu, t \leq 1. $ Then
\begin{align*}
\omega_{\nu}(A^{t}) \leq \Vert A^{t}\Vert &\leq \sec t\alpha \Vert \Re A^{t}\Vert \quad (\textit{by Lemma \ref{l6}})\\
&\leq \sec t\alpha \sec^{2t} \alpha \Vert \Re^{t} A\Vert  \quad (\textit{by  (\ref{cho-1})})\\
&= \sec t\alpha \sec^{2t} \alpha \Vert \Re A\Vert^{t} \\
&\leq \sec t\alpha \sec^{2t} \alpha \, \omega_{\nu}^{t}( \Re A)\\
&\leq \sec t\alpha \sec^{2t} \alpha \, \omega_{\nu}^{t}(  A). \quad (\textit{by (\ref{l-main-eq.2})})\\
\end{align*}
Thus, we have proved the second inequality.  To prove the first, we have\\

\begin{align*}
\omega_{\nu}(A^{t}) \geq \cos t\alpha \, \Vert A^{t} \Vert & \geq \cos t\alpha \, \Vert \Re A^{t} \Vert \quad (\textit{by Corollary \ref{cor.1}})\\
& \geq \cos t\alpha \, \Vert \Re^{t} A \Vert \quad (\textit{by  (\ref{cho-1})})\\
& = \cos t\alpha \, \Vert \Re A \Vert^{t} \\
& \geq \cos t\alpha  \cos^{t}\alpha \, \Vert  A \Vert^{t} \quad (\textit{by Lemma \ref{l6}})\\
& \geq \cos t\alpha  \cos^{t}\alpha \, \omega_{\nu}^{t}( A )
\end{align*}
\end{proof}
A   weighted negative-power version of (\ref{eq.p.nu}) can be obtained as follows. 
\begin{theorem}
Let $ A \in \mathcal{S} _{\alpha} $  and $ 0\leq  \nu \leq 1. $Then for $ t \in [-1, 0], $
\begin{equation}\label{eq.p.re}
\cos t\alpha \, \omega_{\nu}^{t}(A)\leq  \cos^{2t}\alpha \, \omega_{\nu}(A^{t}).
\end{equation}

\end{theorem}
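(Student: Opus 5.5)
The plan is a monotone chain that passes through $\Vert \Re A\Vert$, $\Re^t A$ and $\Re A^t$, in the same spirit as the proof of (\ref{eq.p.nu}). The difference is that for $t\in[-1,0]$ the map $x\mapsto x^t$ is decreasing on $(0,\infty)$, so every scalar inequality between norms flips when we raise it to the power $t$. Note also that $\cos t\alpha=\cos|t|\alpha$ and $\cos^{2t}\alpha=\sec^{2|t|}\alpha\geq 1$. The case $t=0$ is trivial, so assume $t\in[-1,0)$.

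\emph{Step 1.} By Lemma \ref{main}, and since $\Re A>0$, we have $\omega_\nu(A)\geq \omega_\nu(\Re A)=\Vert \Re A\Vert$. Raising to the negative power $t$ gives
$$\omega_\nu^t(A)\leq \Vert \Re A\Vert^t .$$

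\emph{Step 2.} We compare $\Vert\Re A\Vert^t$ with $\Vert \Re^t A\Vert$. This is the one point needing care: for negative $t$ we do not have $\Vert \Re^tA\Vert=\Vert\Re A\Vert^t$. Instead, $\Vert \Re^t A\Vert=\lambda_{\min}(\Re A)^t$, while $\Vert \Re A\Vert^t=\lambda_{\max}(\Re A)^t$. Since $x\mapsto x^t$ is decreasing, $\lambda_{\max}(\Re A)^t\leq \lambda_{\min}(\Re A)^t$, so
$$\Vert\Re A\Vert^t\leq \Vert \Re^t A\Vert.$$

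\emph{Step 3.} By (\ref{cho-2}) we have $0<\Re^t A\leq \cos^{2t}\alpha\,\Re A^t$, hence $\Vert\Re^tA\Vert\leq \cos^{2t}\alpha\,\Vert \Re A^t\Vert$. By Lemma \ref{l6}, $\Vert \Re A^t\Vert\leq \Vert A^t\Vert$. Lemma \ref{l7} gives $A^t\in\mathcal S_{|t|\alpha}$, so Corollary \ref{cor.1} applies and yields $\cos t\alpha\,\Vert A^t\Vert\leq \omega_\nu(A^t)$.

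\emph{Step 4.} Combining Steps 1--3 and multiplying by $\cos t\alpha>0$ gives
$$\cos t\alpha\,\omega_\nu^t(A)\leq \cos^{2t}\alpha\,\cos t\alpha\,\Vert A^t\Vert\leq \cos^{2t}\alpha\,\omega_\nu(A^t),$$
which is (\ref{eq.p.re}).

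The main obstacle is Step 2, since a direct imitation of the positive-power proof would wrongly use $\Vert\Re^tA\Vert=\Vert \Re A\Vert^t$. The eigenvalue comparison above fixes this and costs nothing in the constant.
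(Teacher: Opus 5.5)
Your proof is correct and is the paper's own chain (Corollary~\ref{cor.1}, Lemma~\ref{l6}, \eqref{cho-2}, Lemma~\ref{main}) read in the opposite direction. At the endpoint $t=-1$, invoke Theorem~\ref{t1} together with $A^{-1}\in\mathcal S_\alpha$, since Corollary~\ref{cor.1} is stated only for $t\in(-1,1)$. Your Step 2 is an improvement: the paper asserts $\Vert\Re^tA\Vert=\Vert\Re A\Vert^t$, which is false for $t<0$, and your inequality $\Vert\Re A\Vert^t\le\Vert\Re^tA\Vert$ is what makes that step of the chain valid.
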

\begin{proof}
For $ t \in [-1, 0], $ we have
\begin{align*}
\omega_{\nu}(A^{t}) \geq \cos t\alpha \, \Vert A^{t} \Vert & \geq \cos t\alpha \, \Vert \Re A^{t} \Vert \quad (\textit{by Corollary \ref{cor.1}})\\
& \geq \cos t\alpha \, \cos^{-2t} \alpha \Vert \Re^{t} A \Vert \quad (\textit{by  (\ref{cho-2})})\\
& = \cos t\alpha \, \cos^{-2t} \alpha \Vert \Re A \Vert^{t} \\
& \geq \cos t\alpha  \cos^{-2t} \alpha \, \omega_{\nu}^{t} (\Re A )\\
& \geq \cos t\alpha  \cos^{-2t} \alpha\, \omega_{\nu}^{t}( A )\quad (\textit{by ( \ref{l-main-eq.2})})
\end{align*}
\end{proof}
 Choosing $ \nu=0 $  in (\ref{eq.p.re}), we have  the following inequalities.
 \begin{corollary}
 Let $ A \in \mathcal{S} _{\alpha}. $ Then for $ t \in [-1, 0], $
 \begin{equation}\label{eq.p.re.1}
\cos t\alpha \, \Vert A\Vert^{t} \leq  \cos^{2t}\alpha \, \Vert A^{t}\Vert,
\end{equation}

In particular if $ A $ is positive,
\begin{equation*}
\Vert A\Vert^{t} \leq   \Vert A^{t}\Vert.
\end{equation*}
 \end{corollary}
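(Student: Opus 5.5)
The corollary is the specialization $\nu=0$ of (\ref{eq.p.re}), together with a further specialization to positive $A$. So the plan is to substitute and then simplify.

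\emph{Step 1.} By Definition \ref{d1}, $\omega_0(X)=\|X\|$ for every $X\in\mathbb M_n$. Setting $\nu=0$ in (\ref{eq.p.re}) therefore gives $\omega_0(A)=\|A\|$ and $\omega_0(A^t)=\|A^t\|$. Since $\|A\|>0$ ($A$ is invertible), the power $\omega_0^t(A)=\|A\|^t$ is well defined for $t\in[-1,0]$. This yields (\ref{eq.p.re.1}) immediately. No extra hypothesis is needed: the previous theorem holds for every $\nu\in[0,1]$, and Lemma \ref{l7} guarantees $A^t\in\mathcal S_{|t|\alpha}$, so $A^t$ is a well-defined sectorial matrix.

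\emph{Step 2.} For the positive case, a positive definite $A$ has $W(A)\subset(0,\infty)=S_0$, so $A\in\mathcal S_0$ and we may take $\alpha=0$. Then $\cos t\alpha=1$ and $\cos^{2t}\alpha=1$, and (\ref{eq.p.re.1}) reduces to $\|A\|^t\le\|A^t\|$.

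As a sanity check, the spectral theorem gives $\|A^t\|=\lambda_{\min}(A)^t\ge\lambda_{\max}(A)^t=\|A\|^t$ for $t\le0$, which agrees with the claim.

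The only delicate point is interpreting "positive" as positive definite, since $A^t$ requires invertibility, and noting that this places $A$ in $\mathcal S_0$ so that the constants collapse to $1$.
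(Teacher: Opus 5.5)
Your proposal is correct and matches the paper's argument: the corollary is (\ref{eq.p.re}) with $\nu=0$, using $\omega_0(X)=\|X\|$, and then $\alpha=0$ for positive definite $A$. Your spectral check $\|X^t\|=\lambda_{\min}(X)^t\ge\|X\|^t$ for $X>0$, $t\le 0$, also repairs the step $\|\Re^t A\|=\|\Re A\|^t$ in the paper's proof of (\ref{eq.p.re}): for $t<0$ that step is really an inequality $\ge$, which still points the needed way, so the theorem you invoke does hold.
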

 
 \begin{proposition}
 Let $ A, B \in \mathcal{S} _{\alpha}. $ Then for $ \nu \in [0, 1], $
 \begin{equation}\label{eq.pro.1}
 \omega_{\nu}(AB) \leq \sec^{2} \alpha\, \omega_{\nu}(A) \omega_{\nu}(B). 
 \end{equation}
 In particular,
 \begin{equation}\label{eq.pro.2}
 \omega(AB) \leq \sec^{2} \alpha\, \omega(A) \omega(B), 
 \end{equation}
 \end{proposition}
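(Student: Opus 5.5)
We bound $\omega_\nu(AB)$ from above by operator norms and then use Theorem \ref{t1} to come back to weighted numerical radii. We do not use any product structure of sectorial matrices. In particular, we do not need $AB$ itself to be sectorial, which is fortunate, since $AB\in\mathcal S_\alpha$ fails in general.

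First, by the right-hand inequality in (\ref{eq.equvalent}), which holds for every matrix and not only sectorial ones,
$$\omega_\nu(AB)\le \|AB\|\le \|A\|\,\|B\|,$$
where the last step is submultiplicativity of the operator norm. Next, since $A,B\in\mathcal S_\alpha$, the left-hand inequality of (\ref{eq.2.1}) in Theorem \ref{t1} gives $\|A\|\le \sec\alpha\,\omega_\nu(A)$ and $\|B\|\le\sec\alpha\,\omega_\nu(B)$. Multiplying these bounds yields
$$\omega_\nu(AB)\le \sec^2\alpha\,\omega_\nu(A)\,\omega_\nu(B),$$
which is (\ref{eq.pro.1}). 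Setting $\nu=\tfrac12$ and using $\omega_{1/2}=\omega$ from Definition \ref{d1} gives (\ref{eq.pro.2}).

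The only point needing care is the source of the factor $\sec\alpha$ for each of $A$ and $B$. It comes from Lemma \ref{l6} combined with Lemma \ref{main}, through $\|\Re A\|=\omega_\nu(\Re A)\le\omega_\nu(A)$, which is exactly what Theorem \ref{t1} packages. So the argument reduces to citing Theorem \ref{t1} twice, and no step is a real obstacle.
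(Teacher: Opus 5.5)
Your proof is correct and is essentially the paper's argument. The paper also bounds $\omega_\nu(AB)\le\|A\|\,\|B\|$, and then inlines Lemma \ref{l6} and Lemma \ref{main} through $\|\Re A\|=\omega_\nu(\Re A)\le\omega_\nu(A)$; this is exactly the chain packaged in the left inequality of Theorem \ref{t1} that you cite.
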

 \begin{proof}
 Let $ \nu \in [0, 1]. $ Then
 \begin{align*}
 \omega_{\nu}(AB) &\leq \Vert A B\Vert\\
 &\leq \Vert A \Vert  \Vert  B\Vert\\
 &\leq \sec^{2}\alpha  \Vert \Re A \Vert  \Vert \Re B\Vert  \quad (\textit{by Lemma \ref{l6}})\\
 &=\sec^{2}\alpha \, \omega_{\nu}(\Re A) \omega_{\nu}(\Re B)\\
 &\leq \sec^{2}\alpha \, \omega_{\nu}( A) \omega_{\nu}( B),   \quad (\textit{by( \ref{l-main-eq.2})})\\
 \end{align*}
 which completes the proof of (\ref{eq.pro.1}). Let $ \nu=\frac{1}{2}, $  in (\ref{eq.pro.1}),  inequality (\ref{eq.pro.2}) is deduced.
 \end{proof}
\section{ The weighted numerical radius and operator monotone functions}
The aim of this section is to derive some inequalities involving $\omega_{\nu}(f(A))$ and operator monotone functions.
\begin{lemma}\cite{bed2}\label{l.op.mean}
Let $ A, B \in \mathcal{S} _{\alpha}, \, f \in \textbf{m}$ and $\sigma$ be an operator mean.  Then
$ f(A) \in \mathcal{S} _{\alpha}$ and $ A \sigma B \in \mathcal{S}_{\alpha}. $
\end{lemma}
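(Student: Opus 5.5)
The plan is to reduce both statements to two elementary closure properties of the class $\mathcal{S}_{\alpha}$ and then use the integral representation of operator monotone functions and operator means.

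First, I would show that $\mathcal{S}_{\alpha}$ is a convex cone: if $X,Y\in\mathcal{S}_{\alpha}$ and $a,b\ge 0$ are not both zero, then for a unit vector $x$ we have $x^*(aX+bY)x=a\,x^*Xx+b\,x^*Yx$. The sector $S_{\alpha}$ is a convex cone with $\Re z>0$, so this value lies in $S_{\alpha}$. More generally, for a positive finite nonzero measure $\mu$ and a continuous family $X_s\in\mathcal{S}_{\alpha}$, the integral $\int X_s\,d\mu(s)$ lies in $\mathcal{S}_{\alpha}$. Indeed, $x^*\bigl(\int X_s\,d\mu\bigr)x=\int x^*X_sx\,d\mu$, and the conditions $\Re z>0$ and $|\Im z|\le \Re z\tan\alpha$ are preserved under positive integration, since the inequality integrates and the strict positivity survives. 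Second, I would show that $\mathcal{S}_{\alpha}$ is closed under inversion. Invertibility is already noted in the paper, and for $y=X^{-1}x\neq 0$,
$$x^*X^{-1}x=y^*X^*y=\overline{y^*Xy}.$$
Since $S_{\alpha}$ is invariant under complex conjugation and under multiplication by positive scalars, after normalizing $y$ we get $x^*X^{-1}x\in S_{\alpha}$. This is the content of Lemma~\ref{l7} with $t=1$, which I may cite directly.

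Combining these two facts, the weighted harmonic mean
$$X\,!_s\,Y=\bigl((1-s)X^{-1}+sY^{-1}\bigr)^{-1},\qquad s\in[0,1],$$
lies in $\mathcal{S}_{\alpha}$ whenever $X,Y\in\mathcal{S}_{\alpha}$. The argument is: invert, take a positive combination, and invert again.

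Next I would invoke the Kubo--Ando integral representation. Every $f\in\mathbf{m}$ can be written as
$$f(x)=\int_{[0,1]}\bigl((1-s)+s x^{-1}\bigr)^{-1}\,d\mu(s)$$
for a probability measure $\mu$ on $[0,1]$, where the endpoints $s=0,1$ give the terms $1$ and $x$. Correspondingly,
$$A\sigma B=\int_{[0,1]} A\,!_s\,B\,d\mu(s).$$
Taking $X=I$ and $Y=A$ gives $f(A)=\int I\,!_s\,A\,d\mu(s)$. Since $I\in\mathcal{S}_{\alpha}$, each integrand lies in $\mathcal{S}_{\alpha}$, and the cone property yields $f(A)\in\mathcal{S}_{\alpha}$. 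The same argument with $X=A$ and $Y=B$ yields $A\sigma B\in\mathcal{S}_{\alpha}$.

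The main obstacle is legitimacy: one must check that for non-normal accretive $A$ these integral formulas agree with the definitions of $f(A)$ and $A\sigma B$ used for accretive matrices by Bedrani et al. My plan is to note that $f(A)$ is defined through the holomorphic (Riesz--Dunford) functional calculus, since $\sigma(A)\subset W(A)$ lies in the open right half-plane where $f$ extends analytically. Each integrand $z\mapsto((1-s)+sz^{-1})^{-1}$ is analytic there, and uniform convergence on a contour allows the integral to be interchanged with the Cauchy integral, so the two definitions coincide. For $A\sigma B$, I would either take the integral formula as the definition, as Bedrani et al.\ effectively do, or reduce to the case $A^{-1/2}BA^{-1/2}$ when $A$ has a suitable square root. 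If this reduction proves delicate, I would instead define $A\sigma B$ directly via the integral, which is consistent with the positive definite case.
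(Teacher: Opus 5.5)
Your argument is correct. Closure of $\mathcal{S}_{\alpha}$ under inversion and under positive combinations and integrals, applied to the Kubo--Ando representation $A\sigma B=\int_{[0,1]}A\,!_s\,B\,d\mu(s)$ (with $f(A)=I\sigma A$), gives both memberships, and your contour-integral check that this agrees with the Riesz--Dunford $f(A)$ is sound. The paper gives no proof of its own and only cites Bedrani et al., who, to my recollection, define accretive means by exactly this integral and obtain the sector property the same way, so your proposal matches the intended argument.
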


\begin{theorem}
Let $ A \in \mathcal{S} _{\alpha},  0 \leq \nu \leq 1 $ and $ f \in \textbf{m}. $ Then
\begin{equation}\label{eq.f-w}
\cos \alpha f(\omega_{\nu}(A)) \leq \omega_{\nu}(f(A)) \leq \sec^{3} \alpha f(\omega_{\nu}(A)),
\end{equation}
and
\begin{equation}\label{eq. f-w-1}
\cos \alpha f(\Vert A \Vert ) \leq \Vert f(A) \Vert \leq \sec^{3} \alpha f(\Vert A \Vert).
\end{equation}

\end{theorem}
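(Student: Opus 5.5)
The plan is to copy the chain-of-inequalities style of the power theorem, with Lemma \ref{l5} replacing (\ref{cho-1}). The facts used are these. By Lemma \ref{l.op.mean}, $f(A)\in\mathcal{S}_\alpha$. Hence Theorem \ref{t1} applies to $f(A)$, and Lemma \ref{l6} gives $\cos\alpha\|f(A)\|\le\|\Re f(A)\|\le\|f(A)\|$. Since $\Re A>0$, we have $\omega_\nu(\Re A)=\|\Re A\|$, and Lemma \ref{main} with Theorem \ref{t1} gives
\[
\cos\alpha\|A\|\le\|\Re A\|\le\omega_\nu(A)\le\|A\|.
\]
Finally, $f\in\textbf{m}$ is nondecreasing and, being nonnegative and operator concave on $(0,\infty)$, satisfies $f(cx)\ge cf(x)$ for $c\in(0,1]$. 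This last point is the key scalar ingredient: concavity gives $f(cx)\ge cf(x)+(1-c)f(0^+)\ge cf(x)$.

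For the upper bound in (\ref{eq.f-w}), write
\[
\omega_\nu(f(A))\le\|f(A)\|\le\sec\alpha\,\|\Re f(A)\|\le\sec^{3}\alpha\, f(\|\Re A\|)\le\sec^3\alpha\, f(\omega_\nu(A)).
\]
The middle step uses Lemma \ref{l5} and the last uses monotonicity of $f$ together with $\|\Re A\|\le\omega_\nu(A)$.

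For the lower bound, write
\[
\omega_\nu(f(A))\ge\|\Re f(A)\|\ge f(\|\Re A\|)\ge f(\cos\alpha\|A\|)\ge\cos\alpha\, f(\|A\|)\ge\cos\alpha\, f(\omega_\nu(A)).
\]
The first step is Lemma \ref{main} applied to $f(A)$, using that $\Re f(A)>0$ so $\omega_\nu(\Re f(A))=\|\Re f(A)\|$. The second is Lemma \ref{l5}. The third uses Lemma \ref{l6} and monotonicity. The fourth is the concavity estimate above. The last uses $\omega_\nu(A)\le\|A\|$ and monotonicity.

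Inequality (\ref{eq. f-w-1}) then follows by taking $\nu=0$ (or $\nu=1$), since $\omega_0(X)=\|X\|$ for every $X$. Alternatively, one can redo the chains directly with $\|\cdot\|$.

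The only step that is not a direct citation is $f(\cos\alpha\, x)\ge\cos\alpha\, f(x)$. I would justify it by noting that $f$ is positive and concave on $(0,\infty)$, extends continuously to $[0,\infty)$ with $f(0)\ge0$, and then applying concavity between $0$ and $x$. Everything else is bookkeeping with the earlier lemmas. One should also check that Lemma \ref{l5} is being applied to $f(A)$ with $A\in\mathcal{S}_\alpha$, as stated.
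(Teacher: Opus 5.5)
Your proposal is correct and follows the paper's proof essentially step for step. The upper-bound chain is identical to the paper's. In the lower bound you pass through $\|A\|$, using Lemma \ref{l6}, the inequality $f(cx)\ge cf(x)$, and $\omega_\nu(A)\le\|A\|$, whereas the paper cites \eqref{eq.2.5}; but \eqref{eq.2.5} is proved by exactly that chain, and your concavity justification of $f(cx)\ge cf(x)$ supplies a detail the paper only recalls.
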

\begin{proof}
Recall that every nonnegative operator monotone function   $ f $ and $ 0  \leq t \leq 1, $ satisfies
$ t f(x) \leq f(tx). $ Let $ 0 \leq \nu \leq 1. $ Then
\begin{align*}
\cos \alpha f(\omega_{\nu}(A))   & \leq    f( \cos \alpha \omega_{\nu}(A))   \quad (\textit{by (\ref{eq.2.5})})\\
& \leq f( \omega_{\nu}( \Re A))\\
&= f( \Vert  \Re A  \Vert )\\
& \leq \Vert \Re  f(A)   \Vert   \quad (\textit{by Lemma \ref{l5}})\\
&= \omega_{\nu}( \Re f(A))   \leq \omega_{\nu}( f(A)).
\end{align*}
This completes the proof of the first inequality. To proof the second inequality, we have
	\begin{align*}
\omega_{\nu}(f(A)) \leq \Vert f(A) \Vert & \leq \sec \alpha  \Vert \Re f(A) \Vert  \quad (\textit{by Lemma \ref{l6}})\\
&\leq \sec^{3}\alpha \, f(\Vert \Re A \Vert)   \quad (\textit{by Lemma \ref{l5}})\\
&= \sec^{3}\alpha \, f(\omega_{\nu}( \Re A )) \\
&\leq \sec^{3}\alpha \, f(\omega_{\nu}(  A )). \\
\end{align*} 
If we put $\nu=0$ or 1 in \eqref{eq.f-w}, we get \eqref{eq. f-w-1}.
\end{proof}
\begin{remark}
Note that if  $ A $ is positive, then $ \alpha =0, $  and we have $ \omega_{\nu}(f(A)) = f(\omega_{\nu}(  A )).$
In particular, we obtain $ \omega(f(A)) = f(\omega(  A )) $ and $ \Vert f(A)\Vert  = f(\Vert  A \Vert) $.
\end{remark}

\begin{theorem}\label{pro. con}
Let $ A, B \in \mathcal{S} _{\alpha} $ and $ f \in \textbf{m}. $ Then for $ \lambda, \nu \in [0,1], $

\begin{equation}\label{eq. con-nu}
\omega_{\nu}(f(A)\nabla_{\lambda} f(B)) \leq \sec^{3} \alpha f(\omega_{\nu}(A)\nabla_{\lambda} \omega_{\nu}(B)).
\end{equation}
In particular,
\begin{equation*}
\|f(A)\nabla_{\lambda} f(B)\| \leq \sec^{3} \alpha f(\|A\|)\nabla_{\lambda} \|B\|).
\end{equation*}
\end{theorem}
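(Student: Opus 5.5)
The plan is to pass from $\omega_\nu$ to the operator norm, then to real parts, and use the matrix concavity of $f$ on the positive matrices $\Re A$, $\Re B$. This follows the chain already used for \eqref{eq.f-w}.

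First note that $f(A),f(B)\in\mathcal S_\alpha$ by Lemma \ref{l.op.mean}, and that $S_\alpha$ is a convex cone. Hence $C:=f(A)\nabla_\lambda f(B)\in\mathcal S_\alpha$ as well, because $x^*Cx=\lambda x^*f(A)x+(1-\lambda)x^*f(B)x\in S_\alpha$. Combining \eqref{eq.equvalent} with Lemma \ref{l6} gives
$$\omega_\nu(C)\le \|C\|\le \sec\alpha\,\|\Re C\|=\sec\alpha\,\|\lambda\Re f(A)+(1-\lambda)\Re f(B)\|.$$

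Next, Lemma \ref{l4} gives $\Re f(A)\le \sec^2\alpha\, f(\Re A)$, and likewise for $B$. Since both sides are positive, the norm is monotone, and we obtain
$$\|\Re C\|\le \sec^2\alpha\,\|\lambda f(\Re A)+(1-\lambda) f(\Re B)\|.$$
Matrix concavity of $f$ (equivalent to matrix monotonicity, \cite[Corollary 1.12]{pec}) gives $\lambda f(\Re A)+(1-\lambda)f(\Re B)\le f(\lambda\Re A+(1-\lambda)\Re B)$. The right-hand side is $f$ of a positive matrix, and $f$ is increasing, so its norm equals $f(\|\lambda\Re A+(1-\lambda)\Re B\|)$. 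The triangle inequality then bounds this by $f(\lambda\|\Re A\|+(1-\lambda)\|\Re B\|)$. Finally, since $\Re A>0$ we have $\|\Re A\|=\omega_\nu(\Re A)\le\omega_\nu(A)$ by Lemma \ref{main}, and similarly for $B$. Monotonicity of $f$ then yields
$$\omega_\nu(C)\le \sec^3\alpha\, f\big(\omega_\nu(A)\nabla_\lambda\omega_\nu(B)\big).$$
The particular case follows by taking $\nu=0$ or $\nu=1$, since then $\omega_\nu$ coincides with the operator norm.

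The main obstacle is checking that the concavity inequality is legitimate, which requires the arguments of $f$ to be Hermitian with spectra in $(0,\infty)$. This holds for $\Re A,\Re B>0$. We also need the step $\|f(X)\|=f(\|X\|)$ for $X>0$, which uses that $f$ is increasing. Everything else is direct bookkeeping of the constants: one factor $\sec\alpha$ from Lemma \ref{l6} and one factor $\sec^2\alpha$ from Lemma \ref{l4}.
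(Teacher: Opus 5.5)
Your proof is correct, but it takes a different route from the paper's.

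The paper argues at the scalar level. It first uses that $\omega_\nu$ is a norm to get $\omega_\nu(f(A)\nabla_\lambda f(B))\le\lambda\,\omega_\nu(f(A))+(1-\lambda)\,\omega_\nu(f(B))$. It then applies the single-matrix bound \eqref{eq.f-w} to $A$ and to $B$ separately, and finishes with scalar concavity of $f$. That argument is three lines and reuses the preceding theorem. It needs neither that $f(A)\nabla_\lambda f(B)$ is sectorial nor operator concavity.

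You instead keep the convex combination together. You pass to $\Re C$, rightly checking $C\in\mathcal S_\alpha$ so that Lemma \ref{l6} applies. You then use Lemma \ref{l4} as an operator inequality and invoke operator concavity of $f$ at $\Re A,\Re B$; this is essentially the chain the paper uses for Theorem \ref{m1}. Had you bounded $\|\lambda f(\Re A)+(1-\lambda)f(\Re B)\|$ by the triangle inequality and then used scalar concavity, you would be back to the paper's argument.

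What your route buys is a sharper intermediate estimate. Before your last triangle-inequality step, you have
\[
\omega_\nu\big(f(A)\nabla_\lambda f(B)\big)\le \sec^3\alpha\, f\big(\|\Re(A\nabla_\lambda B)\|\big)\le \sec^3\alpha\, f\big(\omega_\nu(A\nabla_\lambda B)\big),
\]
where the second inequality is Lemma \ref{main}. This implies the theorem, since $\omega_\nu(A\nabla_\lambda B)\le\omega_\nu(A)\nabla_\lambda\omega_\nu(B)$. For $f(x)=x^t$ and $\lambda=\tfrac12$ it gives exactly \eqref{meq2}.
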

\begin{proof}
Since $ \omega_{\nu} $ is a norm and $ f $ is concave, we have 
\begin{align*}
\omega_{\nu}(\lambda f(A)+(1-\lambda) f(B)) & \leq \lambda \omega_{\nu}(f(A))+(1-\lambda) \omega_{\nu}(f(B))  \\
& \leq \sec^{3}\alpha \left(  \lambda f( \omega_{\nu}(A))+(1-\lambda) f(\omega_{\nu}(B))   \right)  \quad \textit{by (\ref{eq.f-w})}\\
&\leq \sec^{3}\alpha f( \lambda\omega_{\nu}(A)+(1-\lambda) \, \omega_{\nu}(B)).  \\
\end{align*}
The proof is completed.
\end{proof} 

In Theorem \ref{pro. con}, let $ f(x)=x^{t}, t \in [0, 1], $ and $ \lambda=\frac{1}{2}, $
we get the following inequality.\\

\begin{corollary}\label{cor.01}
Let $ A, B \in \mathcal{S} _{\alpha}. $ Then for $  \nu, t \in [0,1], $
\begin{equation}\label{meq1} \omega_{\nu} (A^{t}+B^{t}) \leq 2^{1-t} \sec^{3} \alpha \, (\omega_{\nu}(A)+\omega_{\nu}(B))^{t}.
\end{equation}
\end{corollary}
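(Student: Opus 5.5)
The plan is to specialize Theorem \ref{pro. con} to $f(x)=x^{t}$ with $t\in[0,1]$ and $\lambda=\frac12$, and then rescale. First I will check that $f(x)=x^{t}$ belongs to $\textbf{m}$: it maps $(0,\infty)$ into $(0,\infty)$, satisfies $f(1)=1$, and is matrix monotone for $0\le t\le 1$ by the L\"owner--Heinz inequality. For $t=0$ the function is the constant $1$, which is trivially monotone, and $A^{0}=I$ is still meaningful because every $A\in\mathcal{S}_{\alpha}$ is invertible.

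With $f(A)=A^{t}$ (the principal power, well defined for sectorial $A$ as in Lemma \ref{l7}), inequality \eqref{eq. con-nu} with $\lambda=\frac12$ gives
\begin{equation*}
\omega_{\nu}\Big(\tfrac{A^{t}+B^{t}}{2}\Big)\leq \sec^{3}\alpha\,\Big(\tfrac{\omega_{\nu}(A)+\omega_{\nu}(B)}{2}\Big)^{t}.
\end{equation*}
Since $\omega_{\nu}$ is a norm, the left side equals $\frac12\,\omega_{\nu}(A^{t}+B^{t})$. Multiplying both sides by $2$ and writing $2\cdot 2^{-t}=2^{1-t}$ yields \eqref{meq1}.

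There is no real obstacle here. The only points that need care are that the functional calculus $f(A)$ used in Theorem \ref{pro. con}, which comes from \cite{bed1,bed2}, coincides with the principal power $A^{t}$ for sectorial $A$, and that the endpoint $t=0$ is covered, which it is by the remark about the constant function above.
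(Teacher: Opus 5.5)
Your proposal is correct and follows the paper's route exactly: the paper obtains this corollary by specializing Theorem \ref{pro. con} to $f(x)=x^{t}$ and $\lambda=\frac12$. Your rescaling via homogeneity of $\omega_{\nu}$, which gives the factor $2\cdot 2^{-t}=2^{1-t}$, and your check that $x^{t}\in\textbf{m}$ including the endpoint $t=0$, simply make explicit what the paper leaves implicit.
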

If  in last inequality, we set $ \nu=1/2 $ and $ \nu=0, $  respectively, we have\\
$$  \omega (A^{t}+B^{t}) \leq 2^{1-t} \sec^{3} \alpha \, (\omega(A)+\omega(B))^{t} $$

and
$$  \Vert A^{t}+B^{t}\Vert \leq 2^{1-t} \sec^{3} \alpha \, \left( \Vert A\Vert +\Vert B \Vert \right)^{t}. $$

Another version of Theorem \ref{pro. con} can be stated as follows.
\begin{theorem}\label{m1}
Let $ A, B \in \mathcal{S} _{\alpha} $ and $ f \in \textbf{m}. $ Then for $ \lambda, \nu \in [0,1], $
\begin{equation*}
\omega_{\nu}(f(A)\nabla_{\lambda} f(B)) \leq \sec^{3} \alpha\ \omega_{\nu}(f(A\nabla_{\lambda} B)).
\end{equation*}
\end{theorem}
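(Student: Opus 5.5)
We bound the left side above by $\sec\alpha$ times the norm of the real part, compare real parts of operators using operator concavity of $f$ and Lemma \ref{l4}, and then use Lemma \ref{main} to return to $\omega_\nu$.

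\emph{Upper estimate.} By \eqref{eq.equvalent}, Lemma \ref{l.op.mean} and Lemma \ref{l6}, since $f(A)\nabla_\lambda f(B)$ lies in $\mathcal{S}_\alpha$ (each $f(A),f(B)\in\mathcal{S}_\alpha$ and the sector is convex),
\[
\omega_{\nu}(f(A)\nabla_{\lambda} f(B)) \leq \|f(A)\nabla_{\lambda} f(B)\| \leq \sec\alpha\,\|\Re f(A)\nabla_{\lambda}\Re f(B)\|.
\]
Here we use that $\Re$ is linear, so $\Re(f(A)\nabla_\lambda f(B))=\Re f(A)\nabla_\lambda \Re f(B)$.

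\emph{Comparing real parts.} By the right inequality of Lemma \ref{l4}, $\Re f(A)\le \sec^2\alpha\, f(\Re A)$, and likewise for $B$. Hence
\[
0<\Re f(A)\nabla_\lambda \Re f(B)\le \sec^2\alpha\,\big(f(\Re A)\nabla_\lambda f(\Re B)\big).
\]
Since $f$ is nonnegative and operator monotone, it is operator concave (the cited \cite[Corollary 1.12]{pec}). Therefore
\[
f(\Re A)\nabla_\lambda f(\Re B)\le f(\Re A\nabla_\lambda \Re B)=f(\Re(A\nabla_\lambda B)).
\]
The left inequality of Lemma \ref{l4}, applied to $A\nabla_\lambda B\in\mathcal{S}_\alpha$, gives
\[
f(\Re(A\nabla_\lambda B))\le \Re f(A\nabla_\lambda B).
\]
All these operators are positive, so the operator inequalities pass to norms. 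This yields
\[
\|\Re f(A)\nabla_\lambda \Re f(B)\|\le \sec^2\alpha\,\|\Re f(A\nabla_\lambda B)\|.
\]

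\emph{Return to $\omega_\nu$.} The matrix $\Re f(A\nabla_\lambda B)$ is positive, so $\|\Re f(A\nabla_\lambda B)\|=\omega_\nu(\Re f(A\nabla_\lambda B))$. Lemma \ref{main} then gives
\[
\omega_\nu(\Re f(A\nabla_\lambda B))\le \omega_\nu(f(A\nabla_\lambda B)).
\]
Chaining the three estimates produces the factor $\sec\alpha\cdot\sec^2\alpha=\sec^3\alpha$, which is the claim.

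The delicate point is the middle step. It needs operator concavity of $f$ in the form used above, valid for positive arguments, and it needs $A\nabla_\lambda B\in\mathcal{S}_\alpha$ so that Lemma \ref{l4} applies. The latter holds because $W(A\nabla_\lambda B)\subset \lambda W(A)+(1-\lambda)W(B)\subset S_\alpha$ by convexity of the sector. The endpoint cases $\lambda\in\{0,1\}$ are trivial.
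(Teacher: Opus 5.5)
Your proof is correct and follows the paper's argument step for step: $\omega_\nu\le\|\cdot\|$, then Lemma \ref{l6}, linearity of $\Re$, the upper bound of Lemma \ref{l4}, operator concavity of $f$, the lower bound of Lemma \ref{l4} applied to $A\nabla_\lambda B$, and finally Lemma \ref{main}. You also state explicitly that $f(A)\nabla_\lambda f(B)\in\mathcal{S}_\alpha$ and $A\nabla_\lambda B\in\mathcal{S}_\alpha$, which follows from convexity of $S_\alpha$; the paper uses these facts without comment.
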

\begin{proof}
For $ A, B \in \mathcal{S} _{\alpha} $,
\begin{align*}
\omega_{\nu}( f(A)\nabla_{\lambda} f(B)) & \leq \|f(A)\nabla_{\lambda} f(B)\|\\
&\leq \sec\alpha \|\Re(f(A)\nabla_{\lambda} f(B))\|\quad (\textit{by Lemma \ref{l6}})\\
&= \sec\alpha \|\Re(f(A))\nabla_{\lambda}\Re(f(B))\|\\
&\leq \sec^3\alpha \|f(\Re A)\nabla_{\lambda} f(\Re B)\|\quad (\textit{by Lemma \ref{l4}})\\
&\leq \sec^3\alpha \|f(\Re A\nabla_{\lambda} \Re B)\|\quad (\textit{by concavity of $f$})\\
&= \sec^3\alpha \|f(\Re (A\nabla_{\lambda} B))\|\\
&\leq \sec^3\alpha \|\Re(f (A\nabla_{\lambda} B))\|\quad (\textit{by Lemma \ref{l4}})\\
&\leq \sec^3\alpha\ \omega_{\nu}(f (A\nabla_{\lambda} B))\quad (\textit{by Lemma \ref{main}}).
\end{align*}
\end{proof}
As an immediate consequence of this theorem, we get
\begin{equation}\label{meq2}\omega_{\nu}(A^t+B^t)\leq2^{1-t}\sec^3 \alpha\ \omega_{\nu}(A+B)^t,\end{equation}
for each $0\leq t\leq 1$.

Since $\|.\|$ and $\omega$ are special case of $\omega_{\nu}$, these inequalities are also hold for operator norm and numerical radius.
\begin{remark}
If either $\nu=0$ or $A$ and $B$ are normal operators, $\omega_{\nu}$ is equal to operator norm and it is eaily sean that 
$$\omega_{\nu}(A+B)^t\leq(\omega_{\nu}(A)+\omega_{\nu}(B))^t,$$
for $0\leq t\leq1$. So inequality \eqref{meq2} presents a better bound than inequality \eqref{meq1}. However, it may not be true in general case. 

\end{remark}
It is known that (see \cite{An2}) for any unitary invariant norm $ \Vert\vert . \vert\Vert $ on $ \mathbb{M}_{n}, $ 
\begin{equation}\label{eq.sub.u}
\Vert\vert f(A+B)  \vert\Vert  \leq  \Vert\vert f(A)+f(B)  \vert\Vert, \, \, A, B \geq 0, \, f \in \textbf{m}. 
\end{equation} 

We now state this inequality for the weighted numerical radius for sectorial matrices.
  
\begin{theorem}
Let $ A, B \in \mathcal{S} _{\alpha}. $ If   $ f \in \textbf{m}$ and $  \nu \in [0,1], $ then
\begin{equation}\label{eq.sub.nu}
\omega_{\nu}\big(f(A+B)\big) \leq \sec^{3}\alpha \, \omega_{\nu}\big(f(A)+f(B)\big).
\end{equation}
\end{theorem}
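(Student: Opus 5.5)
We follow the same chain of estimates as in the proof of Theorem \ref{m1}. The two ingredients are that $\omega_{\nu}$ is squeezed between the operator norm of the real part and the operator norm itself, and that for positive definite matrices the subadditivity inequality \eqref{eq.sub.u} is available.

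First, since $A,B\in\mathcal{S}_{\alpha}$, we have $A+B\in\mathcal{S}_{\alpha}$. By Lemma \ref{l.op.mean}, $f(A+B)$, $f(A)$, $f(B)$ and hence $f(A)+f(B)$ all lie in $\mathcal{S}_{\alpha}$. We then estimate
\begin{align*}
\omega_{\nu}(f(A+B)) &\leq \|f(A+B)\| \leq \sec\alpha\, \|\Re f(A+B)\| \quad (\textit{by Lemma \ref{l6}})\\
&\leq \sec^{3}\alpha\, \|f(\Re(A+B))\| \quad (\textit{by Lemma \ref{l4}})\\
&= \sec^{3}\alpha\, \|f(\Re A+\Re B)\|.
\end{align*}
The second line uses $\Re f(X)\le \sec^2\alpha f(\Re X)$ together with monotonicity of the norm on positive matrices, since $0\le \Re f(X)$.

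Next, $\Re A$ and $\Re B$ are positive definite, so \eqref{eq.sub.u} with the operator norm gives
\[
\|f(\Re A+\Re B)\|\leq \|f(\Re A)+f(\Re B)\|.
\]
By Lemma \ref{l4}, $f(\Re A)\le \Re f(A)$ and $f(\Re B)\le \Re f(B)$. Both sides are positive, so
\[
\|f(\Re A)+f(\Re B)\|\le \|\Re f(A)+\Re f(B)\| = \|\Re(f(A)+f(B))\|.
\]

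Finally, $\Re(f(A)+f(B))>0$, so its norm equals $\omega_{\nu}(\Re(f(A)+f(B)))$. Lemma \ref{main} bounds this by $\omega_{\nu}(f(A)+f(B))$. Combining the steps yields \eqref{eq.sub.nu} with the constant $\sec^{3}\alpha$.

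The main point to check is the application of \eqref{eq.sub.u}. It is stated for $A,B\geq 0$ and unitarily invariant norms, and we use it only for the operator norm on the positive matrices $\Re A,\Re B$, which is fine. The other delicate point is using only the upper bound of Lemma \ref{l4} at the first conversion and only the lower bound at the second, so that the factors $\sec\alpha\cdot\sec^{2}\alpha$ are the only losses.
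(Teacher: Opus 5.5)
Your proposal is correct and follows the paper's proof essentially step for step. The chain is the same: Lemma \ref{l6}, then the upper bound of Lemma \ref{l4}, then \eqref{eq.sub.u} applied to $\Re A,\Re B$, then the lower bound of Lemma \ref{l4}, and finally Lemma \ref{main} via $\omega_{\nu}(\Re(f(A)+f(B)))=\|\Re(f(A)+f(B))\|$.
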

\begin{proof}
Let $ 0 \leq \nu \leq 1. $ Then we have
\begin{align*}
\omega_{\nu}\big(f(A+B)\big) \leq \Vert f(A+B)  \Vert & \leq \sec\alpha \,  \Vert \Re f(A+B)  \Vert \quad (\textit{by Lemma \ref{l6}})\\
 & \leq \sec^{3}\alpha  \, \Vert  f(\Re A + \Re B)  \Vert \quad (\textit{by Lemma \ref{l4}})\\
 & \leq \sec^{3}\alpha \, \Vert  f(\Re A) + f( \Re B)  \Vert \quad (\textit{by \eqref{eq.sub.u}})\\
 & \leq \sec^{3}\alpha \, \Vert \Re \big( f( A) + f(  B) \big) \Vert \quad (\textit{by Lemma \ref{l4}})\\
 &=\sec^{3}\alpha \,  \omega_{\nu}\big(  \Re \big( f( A) + f(  B) \big)   \big)\\
 &\leq \sec^{3}\alpha \, \omega_{\nu}\big(   f( A) + f(  B)   \big)\quad (\textit{by Lemma \ref{main}}),\\
\end{align*}
which completes the proof.
\end{proof}

Putting $ f(x)=x^{t} $ for $ t \in [0, 1]$  in (\ref{eq.sub.nu}), we have the following reverse of inequality \eqref{meq2}.
\begin{corollary}\label{cor.3}
Let $ A, B \in \mathcal{S} _{\alpha}. $ If   $ 0 \leq t, \nu \leq 1,$  then
\begin{equation}\label{eq.sub.nu.3}
\omega_{\nu}\big((A+B)^{t}) \leq \sec^{3}\alpha \, \omega_{\nu}\big(A^{t}+B^{t}\big).
\end{equation}
\end{corollary}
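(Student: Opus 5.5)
Corollary \ref{cor.3} is the special case $f(x)=x^{t}$ of inequality \eqref{eq.sub.nu}, so the plan is to check that this function is admissible and then substitute.

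First, for $t\in(0,1]$ the function $f(x)=x^{t}$ maps $(0,\infty)$ into $(0,\infty)$, is matrix monotone by the L\"owner--Heinz inequality, and satisfies $f(1)=1$. Hence $f\in\textbf{m}$.

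Second, we need $f(A)=A^{t}$ to be the same matrix that appears in the statement. By Lemma \ref{l7}, $A^{t}$, $B^{t}$ and $(A+B)^{t}$ are the principal powers defined through the functional calculus of sectorial matrices, which is the calculus used in Lemmas \ref{l4} and \ref{l.op.mean}. We also need $A+B\in\mathcal{S}_{\alpha}$. This holds because $W(A+B)\subset W(A)+W(B)$ and $S_{\alpha}$ is a convex cone, so $f(A+B)$ is defined.

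Third, apply \eqref{eq.sub.nu} with this $f$:
\[
\omega_{\nu}\big((A+B)^{t}\big)\leq \sec^{3}\alpha\,\omega_{\nu}\big(A^{t}+B^{t}\big).
\]

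The case $t=0$ has to be handled by hand, since $x^{0}\equiv 1$. Both sides then involve the identity: the left side is $\omega_{\nu}(I)=1$ and the right side is $\sec^{3}\alpha\,\omega_{\nu}(2I)=2\sec^{3}\alpha$, so the inequality holds trivially.

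None of these steps is a real obstacle. The only point that needs care is confirming that the power $A^{t}$ in the statement agrees with the $f(A)$ used in the proof of \eqref{eq.sub.nu}, so that Lemma \ref{l4} and \eqref{eq.sub.u} apply to $x^{t}$ without change.
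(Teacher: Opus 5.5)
Your proposal is correct and is the paper's argument: the paper also obtains the corollary by substituting $f(x)=x^{t}$ into \eqref{eq.sub.nu}, and your extra check that $A+B\in\mathcal{S}_{\alpha}$ (so that $(A+B)^{t}$ makes sense) only makes explicit what the paper leaves implicit. Your separate treatment of $t=0$ is harmless but unnecessary, since the constant function $1$ already belongs to $\textbf{m}$.
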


When $ A, B \in \mathbb{M}_{n} $ are  positive and $ 0 \leq \nu \leq 1, $ then it  is clear that\\ $ \omega_{\nu}(A+B) \geq \max \lbrace \omega_{\nu}(A), \omega_{\nu}(B) \rbrace. $
If either $ A $ or $ B $ is not positive, this inequality is not necessary  true. So, if $ A, B $
are sectorial, we have the following version.
\begin{theorem}
Let $ A, B \in \mathcal{S} _{\alpha}. $ If   $ 0 \leq  \nu \leq 1,$  then

\begin{equation}\label{eq.sum}
\cos^{2}\alpha \,\max \lbrace \omega_{\nu}(A), \omega_{\nu}(B) \rbrace \leq \omega_{\nu}(A+B).
\end{equation}
\end{theorem}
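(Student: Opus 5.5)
By symmetry it suffices to bound $\omega_{\nu}(A)$ from above by $\sec^{2}\alpha\,\omega_{\nu}(A+B)$; the same argument with $A$ and $B$ interchanged handles $\omega_{\nu}(B)$. The plan is to pass to real parts, where everything is positive and comparisons are monotone, and then return to $A+B$ through Lemma \ref{main}. This is the same pattern used in the preceding results of this section.

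First, $\omega_{\nu}(A)\leq \Vert A\Vert$ by \eqref{eq.equvalent}. By Lemma \ref{l6}, $\Vert A\Vert \leq \sec\alpha\,\Vert \Re A\Vert$. Since $B\in\mathcal S_{\alpha}$, we have $\Re B>0$, so
$$0<\Re A\leq \Re A+\Re B=\Re(A+B).$$
Hence $\Vert \Re A\Vert\leq \Vert \Re(A+B)\Vert$ by monotonicity of the norm on positive matrices. Because $\Re(A+B)>0$, we get $\Vert \Re(A+B)\Vert=\omega_{\nu}(\Re(A+B))$, and Lemma \ref{main} gives $\omega_{\nu}(\Re(A+B))\leq \omega_{\nu}(A+B)$. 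Chaining these steps yields
$$\omega_{\nu}(A)\leq \sec\alpha\,\omega_{\nu}(A+B),$$
which is even stronger than \eqref{eq.sum}, since $\cos^{2}\alpha\leq\cos\alpha$. In particular it implies the stated inequality with factor $\cos^{2}\alpha$.

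The only point requiring care is the identity $\omega_{\nu}(P)=\Vert P\Vert$ for $P>0$. For $\nu=1/2$ this is the standard fact for normal matrices. For general $\nu$ it follows from $\max\{\nu,1-\nu\}\Vert P\Vert\leq\omega_{\nu}(P)\leq\Vert P\Vert$ together with taking $\theta=0$: there $\Re_{\nu}(P)=\nu P+(1-\nu)P^{*}=P$, which gives $\omega_{\nu}(P)\geq\Vert P\Vert$. The paper already uses this identity freely, so I expect no real obstacle. If the authors intended a route through the sector containing $A+B$, perhaps comparing $\Vert A\Vert$ with $\Vert A+B\Vert$ via two applications of Lemma \ref{l6}, that would explain the squared cosine. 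The direct chain above suffices either way.
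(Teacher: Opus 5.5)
Your argument is correct. It takes a different and sharper route than the paper, giving $\cos\alpha$ where the paper gets $\cos^{2}\alpha$.

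The paper starts from Theorem~\ref{t1} applied to $A+B$: $\omega_{\nu}(A+B)\geq\cos\alpha\,\Vert A+B\Vert\geq\cos\alpha\,\Vert\Re A+\Re B\Vert$. It then uses the block matrix $\begin{bmatrix}0&\Re A\\ \Re B&0\end{bmatrix}$, with Lemma~\ref{l8}, the bound $2\omega(\cdot)\geq\Vert\cdot\Vert$ from \eqref{eq1} and Lemma~\ref{l9}, to get $\Vert\Re A+\Re B\Vert\geq\max\{\Vert\Re A\Vert,\Vert\Re B\Vert\}$. Finally it returns to $\omega_{\nu}(A)$ through \eqref{eq.2.5}. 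The first and last steps each cost a factor $\cos\alpha$, which is where $\cos^{2}\alpha$ comes from.

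You keep the last step: your $\omega_{\nu}(A)\leq\Vert A\Vert\leq\sec\alpha\,\Vert\Re A\Vert$ is exactly \eqref{eq.2.5}. But you avoid the first loss by applying Lemma~\ref{main} directly to $A+B$, so $\omega_{\nu}(A+B)\geq\omega_{\nu}(\Re(A+B))=\Vert\Re(A+B)\Vert$ with no constant. The paper's passage through $\Vert A+B\Vert$ pays $\cos\alpha$ only to descend to $\Vert\Re(A+B)\Vert$ anyway. You also replace the block-matrix detour by the elementary monotonicity $0<\Re A\leq\Re(A+B)$, which gives the same intermediate inequality.

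Your route also needs weaker hypotheses. To bound $\omega_{\nu}(A)$ you only need $A\in\mathcal S_{\alpha}$ and $\Re B\geq 0$. You never use $A+B\in\mathcal S_{\alpha}$, which the paper needs implicitly to invoke \eqref{eq.2.1} for $A+B$.

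The paper's route gains nothing here beyond reusing already-proved statements as black boxes. Your argument, taken at $\nu=0$, also upgrades the corollary \eqref{eq.sum.2} to $\cos\alpha\max\{\Vert A\Vert,\Vert B\Vert\}\leq\Vert A+B\Vert$.
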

\begin{proof}
Let $ A, B \in \mathcal{S} _{\alpha}$ and   $ 0 \leq  \nu \leq 1.$  Then
\begin{align*}
\omega_{\nu}(A+B) &\geq \cos \alpha \, \Vert  A+B \Vert \quad \textit{(by   (\ref{eq.2.1}))}\\
& \geq \cos \alpha \, \Vert  \Re A+\Re B \Vert \quad \textit{(by  Lemma \ref{l6})}\\
&=2\cos \alpha \,  \omega \bigg( \begin{bmatrix}
0 & \Re A\\
\Re B & 0
\end{bmatrix} \bigg) \quad \textit{(by  Lemma \ref{l8})}\\
& \geq \cos \alpha \, \bigg\Vert   \begin{bmatrix}
0 & \Re A\\
\Re B & 0
\end{bmatrix} \bigg\Vert \quad \textit{(by  (\ref{eq1}))}\\
&=\cos \alpha \, \max \big\lbrace  \Vert \Re A \Vert, \Vert \Re B \Vert \big\rbrace \quad \textit{(by  Lemma \ref{l9})} \\
&=\cos \alpha \, \max \big\lbrace  \omega_{\nu} (\Re A ), \omega_{\nu} (\Re B ) \big\rbrace  \\
&\geq \cos^{2} \alpha \, \max \big\lbrace  \omega_{\nu} ( A ), \omega_{\nu} ( B ) \big\rbrace  \quad \textit{(by  (\ref{eq.2.5}))}.\\
\end{align*}
This completes the proof.
\end{proof}
Let $ \nu=0 $  or $ \nu=1 $  in (\ref{eq.sum}), we have the following inequality.
\begin{corollary}
Let $ A, B \in \mathcal{S} _{\alpha}. $ Then
\begin{equation}\label{eq.sum.2}
\cos^{2}\alpha \,\max \big\lbrace \Vert A\Vert, \Vert B \Vert \big\rbrace \leq \Vert A+B \Vert.
\end{equation}
\end{corollary}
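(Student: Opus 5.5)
The corollary is the special case $\nu=0$ (or $\nu=1$) of inequality \eqref{eq.sum}, so I plan a single substitution with no new estimate. The key point is the normalization in Definition \ref{d1}: $\omega_{0}(X)=\omega_{1}(X)=\Vert X\Vert$ for every $X\in\mathbb{M}_n$.

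First I will confirm this normalization from the definitions. For $\nu=1$, $\Re_{1}(e^{i\theta}X)=e^{i\theta}X$, so
\[
\sup_{\theta}\Vert \Re_{1}(e^{i\theta}X)\Vert=\Vert X\Vert .
\]
For $\nu=0$, $\Re_{0}(e^{i\theta}X)=e^{-i\theta}X^{*}$, whose norm is $\Vert X^{*}\Vert=\Vert X\Vert$. Equivalently, Lemma \ref{l1} gives $\omega_0=\omega_1$ directly.

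Next I apply \eqref{eq.sum} with $\nu=0$ to $A,B\in\mathcal{S}_{\alpha}$. This is legitimate because the proof of \eqref{eq.sum} holds uniformly for all $\nu\in[0,1]$. Substituting $\omega_{0}(A)=\Vert A\Vert$, $\omega_{0}(B)=\Vert B\Vert$ and $\omega_{0}(A+B)=\Vert A+B\Vert$ turns \eqref{eq.sum} into exactly \eqref{eq.sum.2}.

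I do not expect a real obstacle. The only point to check is that the endpoint values $\nu\in\{0,1\}$ are admissible in every ingredient used for \eqref{eq.sum}: \eqref{eq.2.1}, Lemma \ref{l6}, Lemmas \ref{l8} and \ref{l9}, and \eqref{eq.2.5}. Each of these is stated for all $\nu\in[0,1]$ or is independent of $\nu$, so the substitution is valid.
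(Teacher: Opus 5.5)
Your proposal is correct and is exactly the paper's argument: the paper also sets $\nu=0$ or $\nu=1$ in \eqref{eq.sum} and uses $\omega_0=\omega_1=\Vert\cdot\Vert$ from Definition \ref{d1}, and your check of this normalization is right. As a side remark, for $\nu=0$ the first step of the chain proving \eqref{eq.sum} can be replaced by the equality $\omega_0(A+B)=\Vert A+B\Vert$, so the same reasoning gives $\Vert A+B\Vert\geq\Vert\Re A+\Re B\Vert\geq\Vert\Re A\Vert\geq\cos\alpha\,\Vert A\Vert$, which improves the constant from $\cos^{2}\alpha$ to $\cos\alpha$.
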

\section{The weighted numerical radius and operator means}\vspace{.2cm} \noindent
This section is devoted to establishing inequalities connecting \(\omega_{\nu}(A\sigma B)\) and \(\omega_{\nu}(A)\sigma\omega_{\nu}(B)\) for sectorial matrices.

\begin{theorem}
Let $ A, B \in \mathcal{S} _{\alpha} $ and $ 0 \leq \nu \leq 1. $ For each operator mean $ \sigma $,
\begin{equation*}
\omega_{\nu}(A \sigma B) \leq \sec^{3}\alpha \, \big( \omega_{\nu}(A) \sigma \omega_{\nu}(B)  \big).
\end{equation*}
In particular,
\begin{equation*}
\Vert A \sigma B \Vert \leq \sec^{3}\alpha \, \big( \Vert A \Vert \sigma \Vert B\Vert \big).
\end{equation*}
\end{theorem}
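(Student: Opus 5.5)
The plan is to follow the same chain used for the earlier results of this paper. We pass from $\omega_\nu$ to the operator norm, then to real parts, and apply the Kubo--Ando machinery to the positive matrices $\Re A$ and $\Re B$. Finally we return to $\omega_\nu$ via Lemma \ref{main} and the identity $\omega_\nu(\Re X)=\|\Re X\|$ for $X\in\mathcal S_\alpha$.

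First, by Lemma \ref{l.op.mean} (or Lemma \ref{l.sig}), $A\sigma B\in\mathcal S_\alpha$. Using $\omega_\nu(X)\le\|X\|$ and Lemma \ref{l6}, we get
\begin{equation*}
\omega_\nu(A\sigma B)\le \|A\sigma B\|\le \sec\alpha\,\|\Re(A\sigma B)\|.
\end{equation*}
Next, Lemma \ref{l.sig} gives $0<\Re(A\sigma B)\le \sec^2\alpha\,(\Re A\sigma\Re B)$. For positive matrices the norm is monotone, so
\begin{equation*}
\|\Re(A\sigma B)\|\le\sec^2\alpha\,\|\Re A\,\sigma\,\Re B\|.
\end{equation*}
We then apply Lemma \ref{l10} with the operator norm to the positive matrices $\Re A,\Re B$. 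This yields $\|\Re A\sigma\Re B\|\le \|\Re A\|\,\sigma\,\|\Re B\|$, where the right side is the scalar mean $a\sigma b=a f(b/a)$. Since $\Re A,\Re B>0$, we have $\|\Re A\|=\omega_\nu(\Re A)$, and by Lemma \ref{main} this is at most $\omega_\nu(A)$; likewise for $B$.

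The last step uses the monotonicity of the scalar mean $(a,b)\mapsto a\sigma b$ in each variable on $(0,\infty)^2$. This holds because operator means are monotone in both arguments, a Kubo--Ando axiom, which in scalar form follows from $f$ increasing and $x\mapsto xf(b/x)$ increasing. It gives $\|\Re A\|\sigma\|\Re B\|\le \omega_\nu(A)\sigma\omega_\nu(B)$. Combining the three estimates yields the factor $\sec\alpha\cdot\sec^2\alpha=\sec^3\alpha$. The particular case follows by taking $\nu=0$ (or $\nu=1$), since $\omega_0(\cdot)=\|\cdot\|$.

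The main obstacle I expect is justifying Lemma \ref{l10} in the form needed, namely norm submultiplicativity through a mean for positive matrices. If it is not taken as given, I would prove it directly. Put $a=\|\Re A\|$ and $b=\|\Re B\|$. Then $\Re A\le aI$ and $\Re B\le bI$, so monotonicity of the operator mean gives $\Re A\sigma\Re B\le (aI)\sigma(bI)=(a\sigma b)I$. Taking norms of these positive matrices gives the claim. This avoids needing unitarily invariant norm generality and only uses the operator norm, which is all the argument requires.
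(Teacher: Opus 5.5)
Your proposal is correct and follows essentially the same chain as the paper: $\omega_\nu\le\|\cdot\|$, then Lemma \ref{l6}, Lemma \ref{l.sig}, Lemma \ref{l10}, the identity $\omega_\nu(\Re X)=\|\Re X\|$, and finally Lemma \ref{main}, giving the factor $\sec\alpha\cdot\sec^2\alpha$. Your only additions are welcome justifications the paper leaves implicit: the operator-norm case of Lemma \ref{l10} via $\Re A\le\|\Re A\|I$, $\Re B\le\|\Re B\|I$ and monotonicity of $\sigma$, and the monotonicity of the scalar mean used in the last step.
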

\begin{proof}
In view of Lemma \ref{l.op.mean}, we have
\begin{align*}
\omega_{\nu}(A \sigma B) &\leq \Vert A \sigma B  \Vert\\
&\leq \sec \alpha\, \Vert  \Re (A \sigma B)  \Vert \quad \textit{(by  Lemma \ref{l6})}\\
&\leq \sec^{3} \alpha\, \Vert ( \Re A) \sigma  (\Re B)  \Vert \quad \textit{(by  Lemma \ref{l.sig})}\\
&\leq \sec^{3} \alpha\,  \big(\Vert  \Re A \Vert  \sigma  \Vert \Re B  \Vert \big) \quad \textit{(by  Lemma \ref{l10})}\\
&= \sec^{3} \alpha\,  \big(\omega_{\nu} ( \Re A)   \sigma  \omega_{\nu} (\Re B)   \big) \\
&\leq \sec^{3} \alpha\,  \big(\omega_{\nu} (  A)   \sigma  \omega_{\nu} ( B)   \big)  \quad \textit{(by  Lemma \ref{main})}.\\
\end{align*}
The special case is obtained  by letting $\nu=0$.
\end{proof}
This theorem extends Theorem 3.7 of \cite{bed2} and Theorem 8.2 of \cite{bed1}.

Choosing different operator means $\sigma$ yields the following relations for $ A, B \in \mathcal{S} _{\alpha} $ and $ 0 \leq \nu \leq 1$
\begin{align*}
\omega_{\nu}(A \sharp_{t} B) \leq \sec^{3}\alpha \, \big( \omega_{\nu}^{1-t} (A)  \omega_{\nu}^{t} (B)  \big),\\
\omega_{\nu}(\mathcal{L}(A, B))\leq \sec^{3}\alpha \, \mathcal{L}(\omega_{\nu}(A),\omega_{\nu}( B)),\\
\omega_{\nu}(\mathcal{H}_{t}(A, B))\leq \sec^{3}\alpha \, \mathcal{H}_{t}(\omega_{\nu}(A),\omega_{\nu}( B)).
\end{align*}

In addition, we have the following result.
 \begin{theorem}
Let $ A, B \in \mathcal{S} _{\alpha} $ and $ 0 \leq \nu \leq 1. $ For each $f\in\textbf{m}$ and operator mean $ \sigma $,
\begin{equation}
\omega_{\nu}(f(A) \sigma f(B)) \leq \sec^{5}\alpha \,  \omega_{\nu}(f(A \sigma B)) .
\end{equation}
In particular,
\begin{equation*}
\Vert f(A) \sigma f(B) \Vert \leq \sec^{5}\alpha \, \Vert f(A  \sigma B)\Vert.
\end{equation*}
\end{theorem}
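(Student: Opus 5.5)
We follow the chain used for Theorem \ref{m1} and for the previous theorem, now tracking the constants. By Lemma \ref{l.op.mean}, $f(A), f(B)\in\mathcal{S}_\alpha$, hence $f(A)\sigma f(B)\in\mathcal{S}_\alpha$ and $A\sigma B\in\mathcal{S}_\alpha$; therefore Lemmas \ref{l6}, \ref{l.sig}, \ref{l4} can be applied to these matrices. We start with
$$\omega_\nu(f(A)\sigma f(B))\le \|f(A)\sigma f(B)\|\le \sec\alpha\,\|\Re(f(A)\sigma f(B))\|,$$
where the second step is Lemma \ref{l6}. Next we apply the right-hand inequality of Lemma \ref{l.sig} to the pair $f(A),f(B)$. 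This gives $\Re(f(A)\sigma f(B))\le \sec^2\alpha\,(\Re f(A))\sigma(\Re f(B))$, and both sides are positive, so norms are monotone. We have now accumulated a factor $\sec^3\alpha$.

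Next we pass from $\Re f(A)$ to $f(\Re A)$. Lemma \ref{l4} gives $\Re f(A)\le \sec^2\alpha\, f(\Re A)$, and similarly for $B$. Operator means are monotone in both arguments and positively homogeneous, so
$$(\Re f(A))\sigma(\Re f(B))\le \sec^2\alpha\,\big(f(\Re A)\sigma f(\Re B)\big).$$
The running constant is now $\sec^5\alpha$. Then \eqref{m.f} applied to $\Re A,\Re B>0$ yields $f(\Re A)\sigma f(\Re B)\le f(\Re A\sigma \Re B)$. The left inequality of Lemma \ref{l.sig} gives $\Re A\sigma\Re B\le \Re(A\sigma B)$, and since $f$ is operator monotone, $f(\Re A\sigma\Re B)\le f(\Re(A\sigma B))$. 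Finally, the left inequality of Lemma \ref{l4} applied to $A\sigma B\in\mathcal{S}_\alpha$ gives $f(\Re(A\sigma B))\le \Re f(A\sigma B)$. Taking norms throughout, we obtain
$$\omega_\nu(f(A)\sigma f(B))\le \sec^5\alpha\,\|\Re f(A\sigma B)\|.$$

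To finish, note that $\Re f(A\sigma B)>0$, so $\|\Re f(A\sigma B)\|=\omega_\nu(\Re f(A\sigma B))$. Lemma \ref{main} then bounds this by $\omega_\nu(f(A\sigma B))$. The special case follows by taking $\nu=0$.

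The step that needs care is the bookkeeping of the $\sec$ factors. It is not possible to skip the passage through $\Re f(A)$, because Lemma \ref{l.sig} only compares $\Re(X\sigma Y)$ with $\Re X\sigma\Re Y$. The lower bounds at the end cost nothing, so the total constant is $1+2+2=5$, as claimed. The other point to check is that every comparison is between positive matrices, which is needed for the monotonicity of $\|\cdot\|$ and of $\sigma$ to apply.
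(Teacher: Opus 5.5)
Your proposal is correct and follows the paper's proof step for step: Lemma \ref{l6}, the upper bound of Lemma \ref{l.sig} for the pair $f(A),f(B)$, the upper bound of Lemma \ref{l4}, inequality \eqref{m.f}, the lower bound of Lemma \ref{l.sig}, the lower bound of Lemma \ref{l4}, and finally Lemma \ref{main}, giving $\sec^{1+2+2}\alpha=\sec^5\alpha$. You also state several justifications that the paper leaves implicit: membership in $\mathcal{S}_\alpha$ via Lemma \ref{l.op.mean}, monotonicity and positive homogeneity of $\sigma$, and operator monotonicity of $f$ when passing from $f(\Re A\sigma\Re B)$ to $f(\Re(A\sigma B))$.
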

\begin{proof}
Similar to the proof of Theorem \ref{m1},
\begin{align*}
\omega_{\nu}( f(A)\sigma f(B)) & \leq \|f(A)\sigma f(B)\|\\
&\leq \sec\alpha \|\Re(f(A)\sigma f(B))\|\quad (\textit{by Lemma \ref{l6}})\\
&\leq \sec^3\alpha \|\Re(f(A))\sigma\Re(f(B))\|\quad (\textit{by Lemma \ref{l.sig}})\\
&\leq \sec^5\alpha \|f(\Re A)\sigma f(\Re B)\|\quad (\textit{by Lemma \ref{l4}})\\
&\leq \sec^5\alpha \|f(\Re A\sigma \Re B)\|\quad (\textit{by \eqref{m.f}})\\
&\leq \sec^5\alpha \|f(\Re (A\sigma B))\|\quad (\textit{by Lemma \ref{l.sig}})\\
&\leq \sec^5\alpha \|\Re(f (A\sigma B))\|\quad (\textit{by Lemma \ref{l4}})\\
&\leq \sec^5\alpha\ \omega_{\nu}(f (A\sigma B))\quad (\textit{by Lemma \ref{main}}).
\end{align*}
The special case is obtained  by letting $\nu=0$.
\end{proof}

The following theorem states a Heinz-type inequality for the weighted numerical radius of accretive matrices.
\begin{theorem}
Let $ A, B \in \mathcal{S} _{\alpha} $ and $ 0 \leq  t, \nu \leq 1. $ Then
\begin{equation}\label{eq.H}
\cos^{4}\alpha \,  \omega_{\nu}(A\sharp B)  \leq \omega_{\nu}( \mathcal{H}_{t}(A, B))\leq \dfrac{\sec^{4}\alpha}{2}\omega_{\nu}(A+ B),
\end{equation}
and
\begin{equation*}
\cos^{4}\alpha \, \Vert A\sharp B\Vert \leq \Vert \mathcal{H}_{t}(A, B)\Vert \leq \dfrac{\sec^{4}\alpha}{2}\Vert A+ B\Vert.
\end{equation*}

\end{theorem}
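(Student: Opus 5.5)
The plan is to sandwich \(\omega_{\nu}(\mathcal{H}_{t}(A,B))\) between operator norms, and then invoke Lemma \ref{l11} with the operator norm as the unitarily invariant norm. This follows the pattern used throughout the paper. Note that \(\mathcal{H}_{t}(A,B)\in\mathcal{S}_{\alpha}\) and \(A\sharp B\in\mathcal{S}_{\alpha}\) by Lemma \ref{l.op.mean}, since \(\mathcal{S}_\alpha\) is a convex cone and \(\sharp_t,\sharp_{1-t}\) are operator means. Likewise \(A+B\in\mathcal{S}_{\alpha}\). Hence Theorem \ref{t1} applies to each of these matrices, giving \(\cos\alpha\Vert X\Vert\le \omega_{\nu}(X)\le\Vert X\Vert\).

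For the left inequality in \eqref{eq.H}, I will write
\begin{equation*}
\omega_{\nu}(\mathcal{H}_{t}(A,B))\ \ge\ \cos\alpha\,\Vert\mathcal{H}_{t}(A,B)\Vert\ \ge\ \cos\alpha\cdot\cos^{3}\alpha\,\Vert A\sharp B\Vert\ \ge\ \cos^{4}\alpha\,\omega_{\nu}(A\sharp B).
\end{equation*}
The first step is Theorem \ref{t1}, the second is the left half of Lemma \ref{l11} for the operator norm, and the last is \(\omega_\nu\le\Vert\cdot\Vert\).

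For the right inequality, I will write
\begin{equation*}
\omega_{\nu}(\mathcal{H}_{t}(A,B))\ \le\ \Vert\mathcal{H}_{t}(A,B)\Vert\ \le\ \frac{\sec^{3}\alpha}{2}\Vert A+B\Vert\ \le\ \frac{\sec^{4}\alpha}{2}\,\omega_{\nu}(A+B).
\end{equation*}
The last step uses Theorem \ref{t1} for \(A+B\in\mathcal{S}_\alpha\), in the form \(\Vert A+B\Vert\le\sec\alpha\,\omega_\nu(A+B)\). Alternatively, one can pass through \(\Vert\Re(A+B)\Vert=\omega_\nu(\Re(A+B))\le\omega_\nu(A+B)\) via Lemmas \ref{l6} and \ref{main}.

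The norm version then follows by setting \(\nu=0\), where \(\omega_0=\Vert\cdot\Vert\). Because of the order in which the sandwich is applied, this actually gives the stronger constants \(\cos^3\alpha\) and \(\sec^3\alpha/2\), which trivially imply the stated ones.

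The main point needing care is the endpoint cases \(t\in\{0,1\}\), where Lemma \ref{l11} is stated only for \(t\in(0,1)\). For \(t=0\) or \(t=1\) we have \(\mathcal{H}_t(A,B)=(A+B)/2\). The right inequality is then immediate. The left inequality needs \(\Vert A\sharp B\Vert\lesssim\Vert A+B\Vert\), which I would obtain either by a continuity argument in \(t\), or directly. For the direct route, take real parts with Lemma \ref{l.sig} (\(\Re(A\sharp B)\le\sec^2\alpha\,\Re A\sharp\Re B\)). Then use the AM–GM operator inequality \(\Re A\sharp\Re B\le(\Re A+\Re B)/2\), and finally apply Lemma \ref{l6}. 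I would also double-check that Lemma \ref{l11} is indeed valid for the operator norm; it is stated for every unitarily invariant norm, so this is fine.
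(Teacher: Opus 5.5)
Your proposal is correct and is essentially the paper's proof: both directions use the sandwich $\cos\alpha\Vert\cdot\Vert\le\omega_\nu(\cdot)\le\Vert\cdot\Vert$ from Theorem~\ref{t1} together with Lemma~\ref{l11} for the operator norm, in the same order. The paper passes over two points that you check explicitly: that $\mathcal{H}_t(A,B)$ and $A+B$ lie in $\mathcal{S}_\alpha$ so that Theorem~\ref{t1} applies, and the endpoints $t\in\{0,1\}$ not covered by Lemma~\ref{l11}, which you handle via Lemma~\ref{l.sig}, the bound $\Re A\sharp\Re B\le(\Re A+\Re B)/2$, and Lemma~\ref{l6}; your treatment of both is sound.
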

\begin{proof}
For the first inequality,
\begin{align*}
\omega_{\nu}(A\sharp B)  &\leq    \Vert A\sharp B \Vert\\
& \leq \sec^{3}\alpha \, \Vert  \mathcal{H}_{t}(A, B)\Vert  \quad \textit{(by  Lemma \ref{l11})}\\
& \leq \sec^{4}\alpha \, \omega_{\nu} (\mathcal{H}_{t}(A, B)). \quad \textit{(by (\ref{eq.2.1}))} \\
\end{align*}
Also for the second inequality
\begin{align*}
\omega_{\nu}( \mathcal{H}_{t}(A, B)) &\leq \Vert \mathcal{H}_{t}(A, B) \Vert\\
&\leq \sec^{3}\alpha \, \bigg\Vert \dfrac{A+B}{2} \bigg\Vert \quad \textit{(by  Lemma \ref{l11})}\\
&\leq   \dfrac{\sec^{4}\alpha }{2}\omega_{\nu}(A+B).  \quad \textit{(by (\ref{eq.2.1}))} \\
\end{align*}
which completes the proof.
\end{proof}


\end{document}